\newtheorem{proposition}{Proposition}[section] 
\newtheorem{defn}[proposition]{Definition}
\newtheorem{corollary}[proposition]{Corollary}
\newtheorem{remark}[proposition]{Remark}
\newtheorem{example}[proposition]{Example}
\newcommand{\C}{\ensuremath{{\mathbb C}}}
\newcommand{\R}{\ensuremath{{\mathbb R}}}
\newcommand{\E}{\ensuremath{{\mathbb E}}}
\newcommand{\Pro}{\ensuremath{{\mathbb P}}}
\begin{document}
\title{Probabilistic approach to Appell polynomials}
\author{Bao Quoc Ta\thanks{email: tbao@abo.fi} \\
  \small{ \it \AA bo Akademi University}\\
  \small{\it Department of Natural Sciences/Mathematics and Statistics}\\
  \small{\it FIN-20500 \AA bo, Finland}
  }

\date{}
\maketitle
\begin{abstract}
In this paper we study Appell polynomials by connecting them to random variables. This probabilistic approach yields, e.g., the mean value property which is fundamental in the sense that many other properties can be derived from it. We also discuss moment representations of Appell polynomials. In the latter part of the paper the presented general theory is applied to study some classical explicitly given polynomials.
\\
\textbf{Key words:} {Appell polynomials, gamma distribution,  Bernoulli, Euler, Hermite, Laguerre polynomials.}\\
\textbf{2010 mathematics subject classification:} {11B68, 33C45, 60E05}
\end{abstract}

\section{Introduction}
Paul Appell introduced and studied in his paper \cite{App} published 1880, a system of polynomials $\{Q_n, n=0,1,\dots\}$,  where $Q_n$ is of order $n$, satisfying the recursive differential equation
\[\frac{d}{dx}Q_n(x)=nQ_{n-1}(x), \quad n\geq 1.\]
Such polynomials, now called Appell polynomials, have since Appell's pioneering work appeared and applied in a variety of mathematical fields, e.g., number theory,  numerical analysis and probability theory. Some much studied classical polynomials like Bernoulli, Euler, Hermite polynomials are, in fact, Appell polynomials.  Our main interest in Appell polynomials arises from fairly recent observation due to Novikov and Shiryaev \citep{NS}, see also Kyprianou and Surya \citep{KS} and Salminen \cite{Sa}, that Appell polynomials are of key importance when solving optimal stopping problems with power reward functions and a L\'evy process as the underlying. Another application of Appell polynomials is in constructing time-space martingales for L\'evy processes (see, e.g.,\cite{ Sch, SU}). Picard and Lef\`evre \cite{P-L-97} apply Appell polynomials for the problem of ruin in insurance models. We refer to  \cite{I-V-04,p-L-11, P-L-03} for more applications in insurance mathematics.
 
With the applications above in mind we find it motivated to present in detail in this survey paper basic properties of Appell polynomials. We focus on the probabilistic approach in which to every random variable with finite moments or some exponential moments is associated a family of Appell polynomials,  see Definition 2.1 and 2.3 in the next section. Connecting Appell polynomials to random variables provides us, in particular, when the random variable has some exponential moments, with powerful tools to deeper understanding of the behaviour and properties of Appell polynomials. Our general discussion is completed with examples of classical Appell polynomials: Bernoulli, Euler, Hermite and Laguerre polynomials. It is demonstrated that the probabilistic approach offers us new proofs of some old results for these polynomials but yields also interesting new developments, e.g., the moment representations.

The paper is organized as follows. In the next section Appell polynomials are defined, and many general properties are scrutinized. In particular, we give a simple proofs of the mean value property, find conditions for the moment representation of Appell polynomials and discuss the property  of the unique positive root (which is important in optimal stopping). In section 3 the results in section 2 are illustrated and further analysed for  Bernoulli, Euler, Hermite and Laguerre polynomials.

\section{On Appell polynomials}
\subsection{Definition}
Consider a random variable $\xi$ having some exponential moments, i.e., for some $\lambda>0,$ $M_{\xi}(u):=\E(e^{u\xi})$ $<\infty$ for $|u|<\lambda$. Then, as is well known, $\E(\xi^n)<\infty$ for all $n=1, 2\dots,$ and
\begin{equation}\label{moment-generating}
\E(e^{u\xi})=\sum_{n=0}^{\infty}\frac{u^n}{n!}\E(\xi^n),\quad |u|<\lambda.
\end{equation} 
Taking on the RHS of (\ref{moment-generating}) $u$ to be a complex number then the RHS defines a complex analytic function $z\mapsto \Psi(z), \quad |z|<\lambda, z\in \C$. Since $M_{\xi}(0)=1$ it follows from the continuity of $\Psi$ that $|\Psi(z)|>0$ for $|z|<\lambda$. Consequently, $z\mapsto\frac{1}{\Psi(z)}$ is a well defined analytic function and representable via power series
\[\frac{1}{\Psi(z)}=\sum_{n=0}^{\infty}c_n z^n,\quad |z|<\rho,\]
where $\rho>0$ is the radius of convergence. In particular, for $z=u\in \R$ such that $|u|<\rho$ we obtain 
\begin{equation*}
\frac{1}{\E(e^{u\xi})}=\sum_{n=0}^{\infty}c_n u^n=\sum_{n=0}^{\infty}\hat{c}_n\frac{u^n}{n!},
\end{equation*}
where $\hat{c}_n=c_n n!$. For $x\in \R$ and $|u|<\rho$ it holds
\begin{align}\label{App-0}
\nonumber\frac{e^{ux}}{\E(e^{u\xi})}&=\sum_{l=0}^{\infty}\frac{u^l}{l!}x^l\sum_{k=0}^{\infty}\frac{u^k}{k!}\hat{c}_k\\
\nonumber &=\sum_{l=0}^{\infty}\sum_{k=0}^{\infty}\frac{u^{l+k}}{l!k!}\hat{c}_k x^l\\
&=\sum_{n=0}^{\infty}\frac{u^n}{n!}\sum_{k=0}^{n}\binom{n}{k}\hat{c}_{k}x^{n-k},
\end{align}
where we have used the fact that the value of the sum does not depend on the order of summation since the both series are absolutely convergent in a neighbourhood of the origin. 

Guided by (\ref{App-0}), we give now the definition of the Appell polynomials associated with a random variable.  

\begin{defn}\label{app-def}
Let $\xi$ be a random variable having some exponential moments. The polynomials $Q_{n}^{(\xi)}, n=0, 1,2,\dots,$ satisfying
\begin{equation}\label{appell}
\sum_{n=0}^{\infty}\frac{u^n}{n!}Q_{n}^{(\xi)}(x)=\frac{e^{ux}}{\E(e^{u\xi})},
\end{equation}
where $Q_{n}^{(\xi)}$ is of order $n$, are called the Appell polynomials associated with~ $\xi$.
\end{defn}
\begin{remark}
Recall from, e.g., \citep{Sch} that given two analytic functions $f$ and $g$ such that $g(0)=0, g'(0)\neq 0, f(0)\neq 0$ then the polynomials $S_n, n=1,2,\dots$ satisfying
\begin{equation*}\label{sheffer}
\sum_{n=0}^{\infty}\frac{u^n}{n!}S_n(x)=f(u)e^{xg(u)}
\end{equation*}
are called the Sheffer polynomials. The discussion above leading to Definition \ref{app-def} reveals that Appell polynomials are, in fact, special Sheffer polynomials.
\end{remark}
In the next definition we introduce the Appell polynomials in case $\xi$ has some moments (but not necessarily exponential moments).
\begin{defn}\label{app-def2}
Let $\xi$ be a random variable which has the moments up to $N$, i.e., 
\[\E(|\xi^n|)<\infty, \quad n=0,1,\dots, N.\]
The Appell polynomials $\{Q_{n}^{(\xi)}, n=0, 1,2,\dots, N\}$ associated with $\xi$ are defined via
\begin{align}
\label{app-def2-1}
&{Q}_{0}^{(\xi)}(x)=1\quad \text{for all}\quad x,\\
\label{app-def2-2}
&{Q}_{n}^{(\xi)}(x):=\sum_{i=0}^{n}\binom{n}{i}Q_{n-i}^{(\xi)}(0)x^{i},\quad n=1,\dots, N
\end{align}
where $Q_{j}^{(\xi)}(0), j=1,2,\dots,n$ are generated by the recurrence formula
\begin{equation}\label{app-def2-3}
Q_{j}^{(\xi)}(0)=-\sum_{i=0}^{j-1} \binom{j}{i}Q_{j-i}^{(\xi)}(0)\E(\xi^{i}).
\end{equation}
\end{defn}
It is proved below (see Proposition \ref{equi-definitions}) that in case $\xi$ has the moment generating function, i.e., some exponential moments, then the polynomials in Definition \ref{app-def} and Definition \ref{app-def2} are the same.
\begin{remark}\label{def-kyprianou}
The Appell polynomials associated with a random variable $\xi$ can be defined also via the differential equation (\ref{recursive}) together with the normalisation (see, e.g., Kyprianou \cite[p 250]{kyprianou06})
  $$\E(Q_{n}^{(\xi)}(\xi))=0.$$

\end{remark}

\subsection{General properties}
We now state and prove several general properties of the Appell polynomials, many of them are elementary and can be found in \cite{kyprianou06, NS,Sa} but we wish to give a fairly complete discussion to make the paper self-contained.
\subsubsection{Basic formulas}
 Putting $u=0$ in (\ref{appell}) yields  $Q_{0}^{(\xi)}(x)\equiv 1.$ Letting $\xi\equiv 0$ then clearly
 $$Q_{n}^{(0)}(x)=x^n, \quad n=0,1,2,\dots.$$
 \begin{proposition}  Series expansion:
\begin{equation}\label{series-exp}
Q_{n}^{(\xi)}(x+y)=\sum_{k=0}^{n}\binom{n}{k}Q_{k}^{(\xi)}(x)y^{n-k}.
\end{equation}
Letting $y=(m-1)x$, $m=1,2,\dots$ yields the multiplication formula
\[Q_{n}^{(\xi)}(mx)=\sum_{k=0}^{n}\binom{n}{k}Q_{k}^{(\xi)}(x)(m-1)^{n-k}x^{n-k}.\]
Putting $x=0$ in (\ref{series-exp}) gives
\begin{equation}\label{appell-rep}
Q_{n}^{(\xi)}(y)=\sum_{k=0}^{n}\binom{n}{k}Q_{k}^{(\xi)}(0)y^{n-k}.
\end{equation}
\end{proposition}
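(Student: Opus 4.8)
The plan is to work directly from the generating function identity \eqref{appell}, treating the formal/analytic power series in $u$ as the organizing device, exactly in the spirit of the computation \eqref{App-0} that precedes the definition. The key observation is that the exponential factor $e^{ux}$ in the denominator-free numerator splits multiplicatively as $e^{u(x+y)}=e^{ux}e^{uy}$, and this is the algebraic fact that will produce the binomial convolution on the right-hand side of \eqref{series-exp}.

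First I would write, for $|u|$ small,
\begin{equation*}
\sum_{n=0}^{\infty}\frac{u^n}{n!}Q_{n}^{(\xi)}(x+y)=\frac{e^{u(x+y)}}{\E(e^{u\xi})}=\left(\frac{e^{ux}}{\E(e^{u\xi})}\right)e^{uy}=\left(\sum_{k=0}^{\infty}\frac{u^k}{k!}Q_{k}^{(\xi)}(x)\right)\left(\sum_{l=0}^{\infty}\frac{u^l}{l!}y^{l}\right),
\end{equation*}
using \eqref{appell} once in the form with argument $x+y$ and once with argument $x$. Next I would multiply the two power series on the right using the Cauchy product; since both series converge absolutely in a neighbourhood of the origin (the first because $e^{ux}/\E(e^{u\xi})$ is analytic near $0$, the second as the exponential series), the rearrangement into powers of $u$ is justified, and collecting the coefficient of $u^{n+l}$ with $n+l$ renamed appropriately gives
\begin{equation*}
\sum_{n=0}^{\infty}\frac{u^n}{n!}Q_{n}^{(\xi)}(x+y)=\sum_{n=0}^{\infty}\frac{u^n}{n!}\sum_{k=0}^{n}\binom{n}{k}Q_{k}^{(\xi)}(x)\,y^{n-k}.
\end{equation*}
Matching coefficients of $u^n/n!$ — legitimate because a power series expansion is unique — yields \eqref{series-exp}.

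The multiplication formula is then immediate: substitute $y=(m-1)x$ into \eqref{series-exp} and note $y^{n-k}=(m-1)^{n-k}x^{n-k}$. Likewise \eqref{appell-rep} follows by setting $x=0$ in \eqref{series-exp}, after renaming $y$, and using $Q_{n}^{(\xi)}(0+y)=Q_{n}^{(\xi)}(y)$. I do not anticipate a genuine obstacle here; the only point requiring a word of care is the justification of the Cauchy product and the coefficient comparison, i.e. that everything takes place within the common radius of convergence — and this is exactly the same absolute-convergence argument already invoked in deriving \eqref{App-0}, so it can be cited rather than repeated. (If one prefers to avoid analytic considerations entirely, the identity can alternatively be read as an identity of formal power series over $\R[x,y]$, in which case the Cauchy product is purely algebraic and no convergence is needed; I would mention this as a remark.)
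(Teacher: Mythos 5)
Your proposal is correct and follows essentially the same route as the paper: both start from the generating identity \eqref{appell}, use the factorization $e^{u(x+y)}=e^{ux}e^{uy}$, multiply the two absolutely convergent series and compare coefficients of $u^n/n!$, with the special cases obtained by substituting $y=(m-1)x$ and $x=0$. Your explicit remarks on the uniqueness of power series coefficients and the formal-power-series alternative are harmless additions but not a different argument.
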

\begin{proof}
We have
\begin{align*}
\sum_{n=0}^{\infty}\frac{u^n}{n!}Q_{n}^{(\xi)}(x+y)=\frac{e^{u(x+y)}}{\E(e^{u\xi})}&=\frac{e^{ux}}{\E(e^{u\xi})}e^{uy}\\
&=\sum_{k=0}^{\infty}\frac{u^k}{k!}Q_{k}^{(\xi)}(x)\sum_{l=0}^{\infty}\frac{u^l}{l!}y^l\\
&=\sum_{k=0}^{\infty}\sum_{l=0}^{\infty}\frac{u^{k+l}}{k!l!}Q_{k}^{(\xi)}(x)y^l\\
&=\sum_{n=0}^{\infty}\frac{u^n}{n!}\sum_{l=0}^{n}\binom{n}{l}Q_{n-l}^{(\xi)}(0)y^{l},
\end{align*}
where we have substituted $n=k+l$. Notice that the two last equalities follow since the order of summation can be changed due to the absolute convergence of the series.
\end{proof}
\begin{proposition}
Recursive differential equation:
\begin{equation}\label{recursive}
\frac{d}{dx}Q^{(\xi)}_{n}(x)=nQ^{(\xi)}_{n-1}(x),
\end{equation}
or, equivalently,
\begin{equation*}
Q_{n}^{(\xi)}(x)=Q_{n}^{(\xi)}(0)+n\int_{0}^{x}Q_{n-1}^{(\xi)}(z)dz.
\end{equation*}
\end{proposition}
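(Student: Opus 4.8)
The plan is to differentiate the defining generating-function identity \eqref{appell} term by term with respect to $x$ and match powers of $u$. Concretely, I would start from
\[
\sum_{n=0}^{\infty}\frac{u^n}{n!}Q_{n}^{(\xi)}(x)=\frac{e^{ux}}{\E(e^{u\xi})},
\]
which for fixed $u$ with $|u|<\rho$ is an absolutely convergent power series in $x$ (being the product of $e^{ux}$ with a constant), hence can be differentiated in $x$ term by term inside its disk of convergence. Differentiating the left-hand side gives $\sum_{n\ge 1}\frac{u^n}{n!}\,\frac{d}{dx}Q_n^{(\xi)}(x)$, while differentiating the right-hand side yields $u\cdot\frac{e^{ux}}{\E(e^{u\xi})}=u\sum_{n\ge 0}\frac{u^n}{n!}Q_n^{(\xi)}(x)=\sum_{n\ge 0}\frac{u^{n+1}}{n!}Q_n^{(\xi)}(x)$. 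Re-indexing the latter with $m=n+1$ turns it into $\sum_{m\ge 1}\frac{u^m}{(m-1)!}Q_{m-1}^{(\xi)}(x)=\sum_{m\ge 1}\frac{u^m}{m!}\,m\,Q_{m-1}^{(\xi)}(x)$.

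Comparing coefficients of $u^n/n!$ on both sides — legitimate because a power series representation is unique — gives $\frac{d}{dx}Q_n^{(\xi)}(x)=nQ_{n-1}^{(\xi)}(x)$ for all $n\ge 1$, which is \eqref{recursive}. The equivalent integral form then follows immediately by integrating both sides from $0$ to $x$ and using the fundamental theorem of calculus: $Q_n^{(\xi)}(x)-Q_n^{(\xi)}(0)=n\int_0^x Q_{n-1}^{(\xi)}(z)\,dz$.

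There is essentially no hard part here; the only point requiring a word of care is the justification that term-by-term differentiation in $x$ is valid and that coefficient comparison is legitimate — both are standard facts about power series within their radius of convergence, and I would simply invoke them. As an alternative (and perhaps cleaner, since it also covers the moment-only setting of Definition \ref{app-def2}), one could instead start from the explicit representation \eqref{appell-rep}, namely $Q_n^{(\xi)}(y)=\sum_{k=0}^n\binom{n}{k}Q_k^{(\xi)}(0)y^{n-k}$, differentiate this finite sum in $y$, and recognize the result as $n\sum_{k=0}^{n-1}\binom{n-1}{k}Q_k^{(\xi)}(0)y^{n-1-k}=nQ_{n-1}^{(\xi)}(y)$ after using the identity $\binom{n}{k}(n-k)=n\binom{n-1}{k}$. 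I expect to present the generating-function argument as the main proof, since it is the shortest, and mention the integral form as an immediate corollary.
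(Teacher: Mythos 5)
Your proposal is correct, but your main argument takes a different route from the paper: the paper's proof is a one-liner that differentiates the explicit finite representation \eqref{appell-rep}, $Q_n^{(\xi)}(y)=\sum_{k=0}^n\binom{n}{k}Q_k^{(\xi)}(0)y^{n-k}$, using $(n-k)\binom{n}{k}=n\binom{n-1}{k}$ --- which is exactly what you sketch as your ``alternative.'' Your primary route instead differentiates the generating identity \eqref{appell} in $x$ and matches coefficients of $u^n/n!$; this is fine, but note that the justification is not literally the textbook fact about differentiating a power series inside its radius of convergence, since the series variable is $u$ while you differentiate in $x$. One must either argue via the joint analyticity of $(u,x)\mapsto e^{ux}/\E(e^{u\xi})$ (e.g.\ Cauchy's coefficient formula plus differentiation under the integral), or simply observe --- as the derivation \eqref{App-0} already shows --- that each $Q_n^{(\xi)}(x)$ is an explicit polynomial in $x$, at which point you are back to the finite-sum computation. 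The finite-sum route is therefore both the paper's choice and the more economical one, and, as you correctly point out, it has the extra benefit of covering the moment-only setting of Definition \ref{app-def2}, where no generating function is available; the generating-function route buys a slicker-looking derivation but only in the exponential-moment case and at the cost of the interchange argument.
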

\begin{proof}
This follows directly from (\ref{appell-rep}) by differentiating. 
\end{proof}

\begin{proposition}
 Mean value property:\\
\begin{equation}\label{meanvalue}
\E(Q^{(\xi)}_{n}(\xi+x))=x^n, \quad n=1,2\dots.
\end{equation}
\end{proposition}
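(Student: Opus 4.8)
The plan is to prove the identity at the level of exponential generating functions and then match Taylor coefficients in $u$. Write $\Psi(u):=\E(e^{u\xi})$, so that $\Psi$ is analytic and, by the discussion in the text, bounded away from $0$ on $\{|u|<\lambda\}$. Evaluating the defining relation (\ref{appell}) at the random argument $\xi+x$ (for each realisation of $\xi$) gives, for small $|u|$,
\[
\sum_{n=0}^{\infty}\frac{u^n}{n!}\,Q_n^{(\xi)}(\xi+x)=\frac{e^{u(\xi+x)}}{\Psi(u)}.
\]
Taking expectations and interchanging $\E$ with the infinite sum, the right-hand side collapses:
\[
\sum_{n=0}^{\infty}\frac{u^n}{n!}\,\E\!\big(Q_n^{(\xi)}(\xi+x)\big)=\frac{e^{ux}\,\E(e^{u\xi})}{\Psi(u)}=e^{ux}=\sum_{n=0}^{\infty}\frac{u^n}{n!}\,x^n .
\]
Since two convergent power series in $u$ with equal sums have equal coefficients, this yields $\E\big(Q_n^{(\xi)}(\xi+x)\big)=x^n$ for all $n\ge 0$, in particular for $n\ge 1$.

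The one genuine point — and the step I expect to be the main obstacle — is the legitimacy of exchanging expectation and summation; I would justify it by Fubini's theorem, which requires $\sum_{n}\frac{|u|^n}{n!}\,\E\big|Q_n^{(\xi)}(\xi+x)\big|<\infty$ for $u$ near $0$. For this I would use a Cauchy estimate for the Appell polynomials: fix $r$ with $0<r<\lambda$; since $u\mapsto e^{uy}/\Psi(u)$ is analytic on the disc $|u|\le r$ with $Q_n^{(\xi)}(y)/n!$ its $n$-th Taylor coefficient, the Cauchy integral formula on $|u|=r$ gives
\[
\big|Q_n^{(\xi)}(y)\big|\le C_r\,r^{-n}\,n!\,e^{r|y|},\qquad C_r:=\sup_{|u|=r}\frac{1}{|\Psi(u)|}<\infty .
\]
Putting $y=\xi+x$, summing, and using $|\xi+x|\le|\xi|+|x|$ yields $\sum_n\frac{|u|^n}{n!}\big|Q_n^{(\xi)}(\xi+x)\big|\le C_r\,e^{r|x|}\,e^{r|\xi|}\,(1-|u|/r)^{-1}$ for $|u|<r$, and $\E(e^{r|\xi|})\le\E(e^{r\xi})+\E(e^{-r\xi})<\infty$ since $r<\lambda$. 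Hence Fubini applies for every $|u|<r$, which suffices to compare coefficients.

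Alternatively one can avoid Fubini altogether. By the series expansion (\ref{series-exp}), applied with the increment equal to $x$, one has the \emph{finite}-sum identity $Q_n^{(\xi)}(\xi+x)=\sum_{k=0}^{n}\binom{n}{k}Q_k^{(\xi)}(\xi)\,x^{n-k}$, so expectations may be taken term by term without any justification needed beyond finiteness of the moments of $\xi$; this reduces the claim to showing $\E\big(Q_0^{(\xi)}(\xi)\big)=1$ and $\E\big(Q_k^{(\xi)}(\xi)\big)=0$ for $k\ge1$. The latter is just the $x=0$ instance of the computation above, now involving only the harmless series $\sum_n\frac{u^n}{n!}Q_n^{(\xi)}(\xi)=e^{u\xi}/\Psi(u)$, or it can be read off directly by substituting the representation (\ref{appell-rep}) at $y=\xi$ into $\E\big(Q_n^{(\xi)}(\xi)\big)$ and comparing with the recurrence (\ref{app-def2-3}); this route also handles the case where $\xi$ has only finitely many moments. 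In every case the substance of the argument is the single identity $\E\!\big(e^{u(\xi+x)}/\Psi(u)\big)=e^{ux}$ obtained after integration, and everything else is bookkeeping together with the integrability check.
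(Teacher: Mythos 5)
Your proof is correct, but it follows a genuinely different route from the paper's. You apply (\ref{appell}) at the random argument $\xi+x$, interchange $\E$ with the infinite series via the Cauchy-estimate domination $|Q_n^{(\xi)}(y)|\le C_r\,r^{-n}\,n!\,e^{r|y|}$ together with Fubini, and then match Taylor coefficients of $e^{ux}$. The paper never interchanges expectation with an infinite random series: in the exponential-moment case it puts $x=0$ in (\ref{appell}), multiplies the two \emph{deterministic} absolutely convergent series $\E(e^{u\xi})$ and $\sum_n\frac{u^n}{n!}Q_n^{(\xi)}(0)$ to obtain the coefficient identity (\ref{sum-mean}), and then evaluates $\E(Q_n^{(\xi)}(x+\xi))$ as a finite sum via (\ref{appell-rep}), the coefficient of each $x^j$ with $j<n$ vanishing by (\ref{sum-mean}); in the finite-moment case it argues by induction, differentiating $\E(Q_{m+1}^{(\xi)}(x+\xi))$ and fixing the constant of integration by the normalisation $\E(Q_{m+1}^{(\xi)}(\xi))=0$ coming from (\ref{app-def2-3}). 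What each approach buys: yours is shorter and makes the formal generating-function manipulation rigorous in one stroke, but it genuinely requires exponential moments, whereas the paper's second case covers random variables possessing only finitely many moments (Definition \ref{app-def2}); your closing alternative --- the finite-sum expansion (\ref{series-exp}) at increment $x$ combined with $\E(Q_k^{(\xi)}(\xi))=0$ --- is essentially the paper's mechanism and does reach that case, provided one notes that for Definition \ref{app-def2} the expansion (\ref{series-exp}) must be verified directly from (\ref{app-def2-2}) by binomial algebra, since the paper proves it only through the generating function, and that for the generating-function definition the identity $\E(Q_k^{(\xi)}(\xi))=0$ must come from the $x=0$ coefficient computation (as you indicate) rather than from (\ref{app-def2-3}) itself, to avoid circularity with Proposition \ref{equi-definitions}. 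One small point in your Fubini step: $C_r=\sup_{|u|=r}1/|\Psi(u)|<\infty$ needs $\Psi$ to be zero-free on the closed disc of radius $r$; $\Psi$ may vanish at complex points of $\{|z|<\lambda\}$ (the paper's own continuity argument only gives a neighbourhood of $0$), so choose $r$ below the radius of convergence $\rho$ of $1/\Psi$ --- harmless, since you only need small $u$.
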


\begin{proof}
{\it Case 1}: $\xi$ has some exponential moments. Putting $x=0$ in (\ref{appell}) yields
\begin{align*}
1=\E(e^{u\xi})\sum_{n=0}^{\infty}\frac{u^n}{n!}Q_{n}^{(\xi)}(0)
&=\sum_{m=0}^{\infty}\frac{u^m}{m!}\E(\xi^m)\sum_{n=0}^{\infty}\frac{u^n}{n!}Q_{n}^{(\xi)}(0).
\end{align*}
Since the series are absolute convergence we have for all $|u|<\lambda$
\[1=\sum_{k=0}^{\infty}u^k \sum_{n=0}^{k}\frac{1}{n!(k-n)!}Q_{n}^{(\xi)}(0)\E(\xi^{k-n}),\]
and, hence,
\begin{equation}\label{sum-mean}
\sum_{n=0}^{k}\frac{1}{n!(k-n)!}Q_{n}^{(\xi)}(0)\E(\xi^{k-n})=0, \quad\forall k\geq 1.
\end{equation}
From (\ref{appell-rep}) we have
\begin{align}\label{pre-mean}
\nonumber \E(Q_{n}^{(\xi)}(x+\xi))&=\sum_{i=0}^{n}\binom{n}{i}Q_{n-i}^{(\xi)}(0)\E((x+\xi)^i)\\
\
\nonumber &=\sum_{i=0}^{n}\binom{n}{i}Q_{n-i}^{(\xi)}(0)\sum_{j=0}^{i}\binom{i}{j}x^j\,\E(\xi^{i-j})\\
\
&=\sum_{j=0}^{n}x^j\sum_{i=j}^{n}\binom{n}{i}\binom{i}{j}Q_{n-i}^{(\xi)}(0)\E(\xi^{i-j}).
\end{align}
Consider $$A:=\sum_{i=j}^{n}\binom{n}{i}\binom{i}{j}Q_{n-i}^{(\xi)}(0)\E(\xi^{i-j}), \quad j<n.$$
Substituting $m=i-j$ yields for $k\geq 1$
\begin{align*}
A&=\sum_{m=0}^{n-j}\binom{n}{m+j}\binom{m+j}{j}Q_{n-j-m}^{(\xi)}(0)\E(\xi^{m})\\
\
&=\frac{n!}{j!}\sum_{m=0}^{n-j}\frac{1}{(n-j-m)!m!} Q_{n-j-m}^{(\xi)}(0)\E(\xi^{m})\\
\
&=\frac{n!}{(n-k)!}\sum_{m=0}^{k}\frac{1}{(k-m)!m!} Q_{k-m}^{(\xi)}(0)\E(\xi^{m})\\
&=0,
\end{align*}
by (\ref{sum-mean}). Consequently, from (\ref{pre-mean}), we have now $\E(Q_{n}^{(\xi)}(x+\xi))) =x^n$.\\
{\it Case 2}: $\xi$ has the moments up to $N$, $N\geq 1$. In this case the Appell polynomials $Q_{n}^{(\xi)}, n=1\dots, N$ are defined by Definition \ref{app-def2}. It is seen that the mean value property holds for $n=1$. Assume that the mean value property holds for $n=m\leq N$. From (\ref{app-def2-2}) and the fact $\E(|\xi|^m)<\infty$, it follows that 
\begin{align*}
\frac{d}{dx}\E(Q_{m+1}^{(\xi)}(x+\xi))&=\frac{d}{dx}\left(\sum_{i=0}^{m+1}\binom{m+1}{i}Q_{m+1-i}^{(\xi)}(0)\sum_{j=0}^{i}\binom{i}{j}\E(\xi^{i-j})x^j\right)\\
&=\sum_{i=1}^{m+1}\binom{m+1}{i}Q_{m+1-i}^{(\xi)}(0)\sum_{j=1}^{i}\binom{i}{j}\E(\xi^{i-j})jx^{j-1}\\
&=\sum_{i=1}^{m+1}\binom{m+1}{i}Q_{m+1-i}^{(\xi)}(0)\ i \sum_{j=1}^{i}\frac{(i-1)!}{(i-j)!(j-1)!}\E(\xi^{i-j})x^{j-1}\\
&=\sum_{i=1}^{m+1}\binom{m+1}{i}Q_{m+1-i}^{(\xi)}(0)\ i\ \E(x+\xi)^{i-1}\\
&=(m+1)\sum_{i=1}^{m+1}\frac{m!}{(m+1-i)!(i-1)!}Q_{m+1-i}^{(\xi)}(0)\E(x+\xi)^{i-1}\\
&=(m+1)\E\big(Q_{m}^{(\xi)}(x+\xi)\big)=(m+1)x^{m}.
\end{align*}
Hence, \[\E(Q_{m+1}^{(\xi)}(x+\xi))=c+x^{m+1},\]
where $c$ is some constant. Since from (\ref{app-def2-2}) and (\ref{app-def2-3})
 \[\E(Q_{m+1}^{(\xi)}(\xi))=\sum_{j=0}^{m+1}\binom{m+1}{j}Q_{m+1-j}^{(\xi)}(0)\E(\xi^j)=0\]
it is seen that $c=0$ and we obtain
$$\E(Q_{m+1}^{(\xi)}(x+\xi))=x^{m+1},$$
as claimed. 
\end{proof}

\begin{remark}
\item {(i)} Using the definition of the Appell polynomials as given in Remark \ref{def-kyprianou}, the mean value property is proved in \cite{kyprianou06} by exploiting the dominated convergence theorem.
\item {(ii)} Notice that by the mean value property and Definition \ref{app-def} it holds
\[\sum_{n=0}^{\infty}\frac{u^n}{n!}\E(Q_{n}^{(\xi)}(x+\xi))=\sum_{n=0}^{\infty}\frac{u^n}{n!}x^n=\E\Big(\frac{e^{u(x+\xi)}}{\E(e^{u\xi})}\Big)=\E\sum_{n=0}^{\infty}\frac{u^n}{n!}Q_{n}^{(\xi)}(x+\xi).\]
\end{remark}

\begin{proposition}\label{equi-definitions}
Assume that $\xi$ has some exponential moments. Denote by $\tilde{Q}_{n}^{(\xi)}$ the polynomial in Definition \ref{app-def} and ${Q}_{n}^{(\xi)}$ the polynomial in Definition \ref{app-def2}. Then for all $x\in \R$  
\begin{equation}\label{equiv-defs}
\tilde{Q}_{n}^{(\xi)}(x)= {Q}_{n}^{(\xi)}(x), n=0,1, 2\dots.
\end{equation}

\end{proposition}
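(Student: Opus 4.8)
The plan is to show that the polynomials $Q_n^{(\xi)}$ from Definition \ref{app-def2}, which are determined purely by the moments $\E(\xi^i)$, coincide with the polynomials $\tilde Q_n^{(\xi)}$ from Definition \ref{app-def}, which are defined through the generating function $e^{ux}/\E(e^{u\xi})$. By \eqref{appell-rep}, each $\tilde Q_n^{(\xi)}$ is completely determined by its values at zero, $\tilde Q_k^{(\xi)}(0)$, via the same formula \eqref{app-def2-2} that defines $Q_n^{(\xi)}$ in terms of $Q_k^{(\xi)}(0)$. Hence it suffices to prove that $\tilde Q_k^{(\xi)}(0) = Q_k^{(\xi)}(0)$ for all $k = 0,1,\dots$, and since both sequences satisfy $Q_0^{(\xi)}(0)=\tilde Q_0^{(\xi)}(0)=1$, it is enough to show they obey the same recursion \eqref{app-def2-3}.

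First I would extract the recursion for $\tilde Q_k^{(\xi)}(0)$. Setting $x=0$ in \eqref{appell} gives
\[
\sum_{n=0}^{\infty}\frac{u^n}{n!}\tilde Q_n^{(\xi)}(0)=\frac{1}{\E(e^{u\xi})},
\]
equivalently $\E(e^{u\xi})\sum_{n}\frac{u^n}{n!}\tilde Q_n^{(\xi)}(0)=1$. Expanding $\E(e^{u\xi})=\sum_m \frac{u^m}{m!}\E(\xi^m)$ and multiplying the two absolutely convergent power series (as already done in the proof of the mean value property, equation \eqref{sum-mean}), the coefficient of $u^k$ for $k\geq 1$ must vanish:
\[
\sum_{n=0}^{k}\frac{1}{n!(k-n)!}\tilde Q_n^{(\xi)}(0)\,\E(\xi^{k-n})=0.
\]
Multiplying through by $k!$ and isolating the $n=k$ term (where $\E(\xi^0)=1$) yields exactly
\[
\tilde Q_k^{(\xi)}(0)=-\sum_{n=0}^{k-1}\binom{k}{n}\tilde Q_n^{(\xi)}(0)\,\E(\xi^{k-n}),
\]
which, after reindexing $i=k-n$, is precisely \eqref{app-def2-3}.

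The remaining step is a straightforward induction: since $\tilde Q_0^{(\xi)}(0)=1=Q_0^{(\xi)}(0)$ and both sequences satisfy the identical recursion \eqref{app-def2-3}, we get $\tilde Q_k^{(\xi)}(0)=Q_k^{(\xi)}(0)$ for all $k$. Finally, applying \eqref{appell-rep} to $\tilde Q_n^{(\xi)}$ and \eqref{app-def2-2} to $Q_n^{(\xi)}$ — note these are the same formula — we conclude $\tilde Q_n^{(\xi)}(x)=Q_n^{(\xi)}(x)$ for every $x\in\R$ and every $n$. I do not anticipate a serious obstacle here; the only point requiring a little care is justifying that \eqref{appell-rep} genuinely holds for the Definition \ref{app-def} polynomials (this is already established in the earlier proposition, whose proof used only absolute convergence of the relevant series in a neighbourhood of the origin), so that both families are governed entirely by their values at the origin.
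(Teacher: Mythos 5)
Your proof is correct, but it follows a genuinely different route from the paper's. The paper argues by induction on $n$: the differential recursion \eqref{recursive} (which holds for both families, directly from \eqref{app-def2-2} in one case and from \eqref{appell-rep} in the other) shows that $\tilde Q_{k+1}^{(\xi)}-Q_{k+1}^{(\xi)}$ is a constant, and the mean value property \eqref{meanvalue} --- proved separately for each definition --- forces that constant to vanish. You instead compare the two families through their constant terms: setting $x=0$ in \eqref{appell} and expanding the Cauchy product reproduces \eqref{sum-mean}, which after multiplication by $k!$ says that the numbers $\tilde Q_k^{(\xi)}(0)$ obey the same triangular recursion as the $Q_k^{(\xi)}(0)$ of Definition \ref{app-def2}; they then agree by induction, and \eqref{appell-rep} (the same formula as \eqref{app-def2-2}) transfers the equality from the origin to all of $\R$. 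Your argument is self-contained and somewhat more elementary, needing only the coefficient identity \eqref{sum-mean} and the expansion \eqref{appell-rep}, whereas the paper's proof leans on the mean value property for both definitions (itself established in two separate cases); the paper's route, in exchange, never has to look at the explicit form of the recursion for the constant terms. One small point of bookkeeping: after your reindexing $i=k-n$ the recursion reads $\tilde Q_k^{(\xi)}(0)=-\sum_{i=1}^{k}\binom{k}{i}\tilde Q_{k-i}^{(\xi)}(0)\E(\xi^{i})$, whose summation range does not literally match the printed \eqref{app-def2-3} (sum over $i=0,\dots,j-1$ paired with $\E(\xi^i)$); the printed formula appears to carry an index slip --- compare its Bernoulli-number instance \eqref{ber-numb}, which agrees with your version --- so this discrepancy is a typo in the source, not a gap in your argument.
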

\begin{proof}

Obviously  equality (\ref{equiv-defs}) holds for $n=0,1$. Assume (\ref{equiv-defs}) holds for $n=k$, we will show that (\ref{equiv-defs}) also holds for $n=k+1$. We have 
\[\frac{d}{dx}{Q}_{k+1}^{(\xi)}(x)=(k+1)Q_{k}^{(\xi)}(x),\]
and from (\ref{recursive})
\[\frac{d}{dx}\tilde{Q}_{k+1}^{(\xi)}(x)=(k+1)\tilde{Q}_{k}^{(\xi)}(x).\]
So we obtain
\[\frac{d}{dx}(\tilde{Q}_{k+1}^{(\xi)}(x)-{Q}_{k+1}^{(\xi)}(x))=0,\]	
and, hence,
\[\tilde{Q}_{k+1}^{(\xi)}(x)-{Q}_{k+1}^{(\xi)}(x)=c,\quad \text{ for all}\quad x.\]
Consequently, since both $\tilde{Q}_{k+1}^{(\xi)}(x)$ and ${Q}_{k+1}^{(\xi)}(x)$ satisfy the mean value property, 
\[\E(\tilde{Q}_{k+1}^{(\xi)}(x+\xi))-\E({Q}_{k+1}^{(\xi)}(x+\xi))=c,\]
which yields $c=0$.
\end{proof}

The next result is also an application of the mean value property. 
\begin{proposition}
Representation of powers:
\begin{equation}\label{inverse-formula}
x^n=\sum_{k=0}^{n}\binom{n}{k}Q_{k}^{(\xi)}(x)\E(\xi^{n-k}),
\end{equation}
in other words
\begin{equation}\label{inverse-2}
Q_{n}^{(\xi)}(x)=x^n-\sum_{k=0}^{n-1}\binom{n}{k}Q_{k}^{(\xi)}(x)\E(\xi^{n-k}).\
\end{equation}

\end{proposition}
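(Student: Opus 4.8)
The plan is to derive the representation of powers directly from the mean value property, essentially by running the generating-function identity from Definition \ref{app-def} in reverse. The key observation is that formula (\ref{inverse-formula}) is nothing but the statement that the binomial-type expansion of $\E((x+\xi)^n)$, read with the roles of the power and the Appell polynomial swapped, collapses back to $x^n$ — which is precisely what (\ref{meanvalue}) asserts. So the proof should be short.

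First I would treat the case where $\xi$ has some exponential moments using generating functions. Multiply both sides of (\ref{appell}) by $\E(e^{u\xi}) = \sum_{m\ge 0} \frac{u^m}{m!}\E(\xi^m)$, which is legitimate for $|u|$ small since both series converge absolutely there:
\begin{equation*}
e^{ux} = \Big(\sum_{m=0}^{\infty}\frac{u^m}{m!}\E(\xi^m)\Big)\Big(\sum_{k=0}^{\infty}\frac{u^k}{k!}Q_{k}^{(\xi)}(x)\Big) = \sum_{n=0}^{\infty}\frac{u^n}{n!}\sum_{k=0}^{n}\binom{n}{k}Q_{k}^{(\xi)}(x)\E(\xi^{n-k}),
\end{equation*}
where as usual the reordering is justified by absolute convergence and the substitution $n=m+k$. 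Comparing the coefficient of $u^n/n!$ on both sides against $e^{ux}=\sum_n \frac{u^n}{n!}x^n$ gives (\ref{inverse-formula}) immediately, and (\ref{inverse-2}) is just the term $k=n$ (for which $\binom{n}{n}\E(\xi^0)Q_n^{(\xi)}(x)=Q_n^{(\xi)}(x)$) isolated on one side.

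For the case where $\xi$ has only finitely many moments, I would argue without generating functions, using instead the mean value property (\ref{meanvalue}) together with the series expansion (\ref{series-exp}) or (\ref{appell-rep}). Starting from $\E(Q_n^{(\xi)}(x+\xi)) = x^n$ and expanding $Q_n^{(\xi)}(x+\xi)$ via (\ref{series-exp}) with the shift split as $x + \xi$, one gets $x^n = \sum_{k=0}^n \binom{n}{k}\E\big(Q_k^{(\xi)}(x)\,\xi^{n-k}\big) = \sum_{k=0}^n \binom{n}{k}Q_k^{(\xi)}(x)\E(\xi^{n-k})$, where $Q_k^{(\xi)}(x)$ comes out of the expectation since it does not depend on $\xi$ and all moments $\E(\xi^{n-k})$ with $n-k\le n\le N$ exist. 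This covers both definitions simultaneously and in fact makes the exponential-moment case a corollary, so I might present only this argument. I do not anticipate a genuine obstacle here; the only point requiring a word of care is the interchange of the finite sum with the expectation and the absolute convergence needed to reorder the double series in the generating-function version, both of which are routine given the hypotheses on the moments of $\xi$.
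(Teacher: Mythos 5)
Your second argument---expanding $Q_{n}^{(\xi)}(x+\xi)$ via (\ref{series-exp}) with $y=\xi$, pulling the constants $Q_{k}^{(\xi)}(x)$ out of the expectation, and invoking the mean value property (\ref{meanvalue})---is exactly the paper's proof, and it is correct. The preliminary generating-function derivation (multiplying (\ref{appell}) by $\E(e^{u\xi})$ and comparing coefficients) is a valid alternative in the exponential-moment case, but, as you yourself observe, it is subsumed by the moment argument and need not be presented.
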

\begin{proof}
From (\ref{series-exp}) we have
\[Q_{n}^{(\xi)}(x+\xi)=\sum_{k=0}^{n}\binom{n}{k}Q_{k}^{(\xi)}(x)\xi^{n-k}.\]
Taking expectation and using the mean value property yields (\ref{inverse-formula}).
\end{proof}

\begin{example}
Let $\xi$ be a standard log-normally distributed random variable , i.e., $\xi=e^{\eta}$, where $\eta\sim N(0,1)$. Then $\mu_k=\E(\xi^k)=e^{k^2/2}$. Hence, the infinite series
\[\sum_{n=0}^{\infty}\frac{u^n}{n!}\E(\xi^n)=\sum_{n=0}^{\infty}\frac{u^n}{n!}e^{n^2/2}\]
diverges and $\xi$ has no exponential moments. The Appell polynomials $Q_{n}^{(\xi)}, n=1,2\dots$ can  be defined as in Definition \ref{app-def2}. Using the recurrence formula (\ref{inverse-2}) we can calculate
\[Q_{1}^{(\xi)}(x)=x-e^{1/2};\quad  Q_{2}^{(\xi)}(x)=x^2-2x+2e^{1/2}-e^2,\]
\[Q_{3}^{(\xi)}(x)=x^3-3e^{1/2}x^2-3(e^2-2e^{1/2})x-6e-e^{9/2},\dots\]
\end{example}
From (\ref{appell-rep}) it is seen that the coefficients of $Q_{n}^{(\xi)}$ are determined by $Q_{k}^{(\xi)}(0)$, $k=1,2,\dots,n$. To calculate these we may use the triangular system (\ref{app-def2-3}) to obtain for $k=1,2,\dots$
\begin{proposition} It holds
\begin{equation}\label{q-n-zero}
Q_{k}^{(\xi)}(0)=-det\left( \begin{array}{ccccc}
\mu_k & \binom{k}{k-1} \mu_1 &\ldots&\binom{k}{2}\mu_{k-2}  &\binom{k}{1}\mu_{k-1} \\
\mu_{k-1} & 1 &\ldots& \binom{k-1}{2}\mu_{k-3} & \binom{k-1}{1}\mu_{k-2}\\
\vdots&\vdots&\ddots&\vdots&\vdots\\
\mu_2 & 0 &\ldots& 1&\binom{2}{1}\mu_1\\
\mu_1 & 0 &\ldots& 0& 1

 \end{array} \right),\\
 \end{equation}
 where $\mu_k:=\E(\xi^k), k=1,\dots, n$ are the moments of $\xi$.
 \end{proposition}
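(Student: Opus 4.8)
The plan is to read the recursion (\ref{app-def2-3}) (equivalently (\ref{sum-mean})) as a triangular system of linear equations for the numbers $Q_1^{(\xi)}(0),\dots,Q_k^{(\xi)}(0)$ and to single out the last of them by Cramer's rule. Write $q_j:=Q_j^{(\xi)}(0)$, $\mu_i:=\E(\xi^i)$, with $q_0=\mu_0=1$. Then, for $j=1,\dots,k$, the recursion is equivalent to
\[
\sum_{n=0}^{j}\binom{j}{n}\mu_{j-n}\,q_n=0,\qquad\text{i.e.}\qquad \sum_{n=1}^{j}\binom{j}{n}\mu_{j-n}\,q_n=-\mu_j ,
\]
that is, to $A\mathbf q=\mathbf b$ with $\mathbf q=(q_1,\dots,q_k)^{T}$, $\mathbf b=-(\mu_1,\dots,\mu_k)^{T}$, and $A=(A_{jn})$ the $k\times k$ lower triangular matrix $A_{jn}=\binom{j}{n}\mu_{j-n}$ for $n\le j$, $A_{jn}=0$ for $n>j$. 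Since $A_{jj}=\binom{j}{j}\mu_0=1$ for every $j$, we have $\det A=1$; in particular the system is uniquely solvable and Cramer's rule gives $q_k=\det A^{(k)}$, where $A^{(k)}$ is $A$ with its last column replaced by $\mathbf b$.

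The second and decisive step is to identify $\det A^{(k)}$ with $-\det$ of the matrix in (\ref{q-n-zero}), call it $M_k$. A direct inspection of the entries shows that $M_k$ is transformed into $A^{(k)}$ by three operations: reverse the order of the rows, reverse the order of the columns, and multiply the last column (the one that came from $\mathbf b$) by $-1$. Concretely, under $\rho\mapsto k+1-\rho$, $\gamma\mapsto k+1-\gamma$ the first column $(\mu_k,\mu_{k-1},\dots,\mu_1)^{T}$ of $M_k$ becomes, after the sign change, the column $\mathbf b=-(\mu_1,\dots,\mu_k)^{T}$, while the upper-Hessenberg block $\binom{k-\rho+1}{k-\gamma+1}\mu_{\gamma-\rho}$ (for $\gamma\ge2$) becomes $\binom{r}{c}\mu_{r-c}$, which is exactly $A_{rc}$ for $c\le r$ and $0$ otherwise. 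Reversing the $k$ rows and reversing the $k$ columns contribute the same permutation sign and hence cancel, whereas negating one column contributes $-1$; therefore $\det A^{(k)}=-\det M_k$, and combining with $q_k=\det A^{(k)}$ gives $Q_k^{(\xi)}(0)=q_k=-\det M_k$, which is (\ref{q-n-zero}).

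I expect the only genuine obstacle to be this index and sign bookkeeping, i.e. checking cleanly that $M_k$ is the row/column reversal of $A^{(k)}$ up to a sign in one column, and tracking the permutation signs; everything else (that (\ref{app-def2-3}) is triangular with unit diagonal, that $\det A=1$, the use of Cramer's rule) is routine. As a safeguard I would verify $k=1$ ($M_1=(\mu_1)$, giving $q_1=-\mu_1$) and $k=2$ ($\det M_2=\mu_2-2\mu_1^2$, giving $q_2=2\mu_1^2-\mu_2$) against the direct recursion. If one prefers to avoid the reversal argument altogether, an alternative is to prove (\ref{q-n-zero}) by induction on $k$: expand $\det M_k$ along its first column (or last row), insert the induction hypothesis for the smaller determinants, and check that the resulting expression satisfies (\ref{app-def2-3}); this reproduces the same recursion but the cofactor bookkeeping is, if anything, somewhat heavier.
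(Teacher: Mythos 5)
Your proposal is correct and follows essentially the same route as the paper: the paper's (very terse) proof likewise treats the recurrence $\sum_{j=1}^{k}\binom{k}{j}Q_{j}^{(\xi)}(0)\mu_{k-j}=-\mu_k$ as a triangular linear system with unit diagonal and solves it, the determinant formula being exactly the Cramer's-rule expression you derive. You merely supply the details the paper leaves implicit (unit determinant of the coefficient matrix, and the row/column-reversal plus one-column sign change identifying the Cramer determinant with $-\det M_k$), and your bookkeeping checks out, e.g.\ against $k=1,2$.
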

 \begin{proof}
 From (\ref{app-def2-3}) solving the equation with variables $Q_{k}^{(\xi)}(0)$, $k=1,2,\dots, n$
 \[\sum_{j=1}^{k}\binom{k}{j}Q_{j}^{(\xi)}(0)\E(\xi^{k-j})=-\mu_k,\]
 yields (\ref{q-n-zero}).
 \end{proof}
  We have also the following relation (see \citep[formula 2.9]{Sa}) between the cumulants of $\xi$. Recall that the cummulants $\kappa_n$ of $\xi$ are defined by 
 \begin{equation}\label{cumulants}
 \sum_{n=0}^{\infty}\frac{u^n}{n!}\kappa_n=\log(\E(e^{u\xi})).
 \end{equation}
\begin{proposition} For $n=1,2\dots$ it holds
\begin{equation}\label{cumulant-mu-q}
\kappa_{n+1}=\sum_{j=0}^{n}\binom{n}{j}\mu_{j+1}Q_{n-j}^{(\xi)}(0).
\end{equation}
\end{proposition}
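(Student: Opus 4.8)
The plan is to work entirely at the level of generating functions, which is legitimate here since the cumulants $\kappa_n$ are only defined when $\xi$ has some exponential moments, so all the series below converge absolutely in a common neighbourhood of the origin. Write $\Psi(u)=\E(e^{u\xi})=\sum_{n\ge 0}\frac{u^n}{n!}\mu_n$ with $\mu_0=1$. The one identity I need to notice at the outset is that setting $x=0$ in (\ref{appell}) gives
\[
\frac{1}{\Psi(u)}=\sum_{k\ge 0}\frac{u^k}{k!}Q_k^{(\xi)}(0),
\]
valid for $|u|$ small.

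Next I would differentiate the defining relation (\ref{cumulants}) for the cumulants, $\log\Psi(u)=\sum_{n\ge 0}\frac{u^n}{n!}\kappa_n$, to obtain
\[
\frac{\Psi'(u)}{\Psi(u)}=\sum_{n\ge 1}\frac{u^{n-1}}{(n-1)!}\kappa_n=\sum_{n\ge 0}\frac{u^n}{n!}\kappa_{n+1}.
\]
On the other hand, term-by-term differentiation of the power series for $\Psi$ gives $\Psi'(u)=\sum_{j\ge 0}\frac{u^j}{j!}\mu_{j+1}$, and combining this with the expansion of $1/\Psi(u)$ above yields
\[
\frac{\Psi'(u)}{\Psi(u)}=\Psi'(u)\cdot\frac{1}{\Psi(u)}=\left(\sum_{j\ge 0}\frac{u^j}{j!}\mu_{j+1}\right)\left(\sum_{k\ge 0}\frac{u^k}{k!}Q_k^{(\xi)}(0)\right).
\]

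Finally I would form the Cauchy product of the two series on the right — admissible because both converge absolutely near $0$, exactly as in the manipulations leading to (\ref{App-0}) — collecting the coefficient of $u^n$:
\[
\frac{\Psi'(u)}{\Psi(u)}=\sum_{n\ge 0}\frac{u^n}{n!}\sum_{j=0}^{n}\binom{n}{j}\mu_{j+1}\,Q_{n-j}^{(\xi)}(0).
\]
Comparing this with the cumulant expansion of $\Psi'/\Psi$ above and equating coefficients of $u^n/n!$ yields (\ref{cumulant-mu-q}) (the case $n=0$ correctly gives $\kappa_1=\mu_1$). There is no genuine obstacle here beyond the routine justification of differentiating and multiplying power series inside their common radius of convergence; the only step deserving a moment's care is recognising that $\sum_k \frac{u^k}{k!}Q_k^{(\xi)}(0)$ is precisely $1/\Psi(u)$, which is immediate from Definition \ref{app-def}. (An alternative, purely algebraic route would combine the triangular system (\ref{app-def2-3}) with the standard moment–cumulant recursion by induction on $n$, but this is more cumbersome and the generating-function argument is cleaner.)
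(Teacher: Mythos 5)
Your proof is correct and follows essentially the same route as the paper: differentiate the cumulant generating function to get $\Psi'(u)/\Psi(u)=\sum_n \frac{u^n}{n!}\kappa_{n+1}$, identify $1/\Psi(u)$ with $\sum_k\frac{u^k}{k!}Q_k^{(\xi)}(0)$ via $x=0$ in (\ref{appell}), multiply the two series and equate coefficients. The only difference is that you spell out the Cauchy-product and convergence justifications more explicitly than the paper does.
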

\begin{proof}
Taking derivative in $u$ in (\ref{cumulants}) we obtain
 \[\sum_{n=0}^{\infty}\frac{u^n}{n!}\kappa_{n+1}=\frac{\E(\xi e^{u\xi})}{\E(e^{u\xi})}=\sum_{n=0}^{\infty}\frac{u^n}{n!}\mu_{n+1}\sum_{n=0}^{\infty}\frac{u^n}{n!}Q_{n}^{(\xi)}(0),\]
which yields (\ref{cumulant-mu-q}).
\end{proof}
\subsubsection{Probabilistic properties}
In this section we focus on properties of Appell polynomials induced by the transforms of the underlying random variable. 
 \begin{proposition}\label{symmetric}
Let $\eta:=-\xi$ then for all $n$ and $x$
\begin{equation}\label{relation}
Q^{(\eta)}_{n}(x)=(-1)^nQ^{(\xi)}_{n}(-x).
\end{equation}
In particular,
\[Q^{(\eta)}_{n}(0)=(-1)^nQ^{(\xi)}_{n}(0).\]
\end{proposition}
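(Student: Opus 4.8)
The plan is to exploit the generating function characterization in Definition~\ref{app-def}, since the statement is most transparent there; the case of finitely many moments will then follow from Proposition~\ref{equi-definitions} or by a parallel induction. First I would write down the defining identity for $\eta=-\xi$, namely
\[
\sum_{n=0}^{\infty}\frac{u^n}{n!}Q^{(\eta)}_{n}(x)=\frac{e^{ux}}{\E(e^{u\eta})}=\frac{e^{ux}}{\E(e^{-u\xi})}.
\]
The key observation is that $\E(e^{-u\xi})=M_{\xi}(-u)$, so replacing $u$ by $-u$ in the right-hand side converts it into the generating function for the $Q^{(\xi)}_{n}$: explicitly, substituting $u\mapsto -u$ gives $e^{-ux}/\E(e^{u\xi})=\sum_{n}\frac{(-u)^n}{n!}Q^{(\xi)}_{n}(-x)$ after also flipping the sign of $x$. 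Matching the two power series in $u$ coefficient by coefficient yields $Q^{(\eta)}_{n}(x)=(-1)^nQ^{(\xi)}_{n}(-x)$.

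More carefully, the chain I would present is: starting from $\sum_{n}\frac{u^n}{n!}Q^{(\eta)}_{n}(x)=e^{ux}/\E(e^{-u\xi})$, write $e^{ux}=e^{(-u)(-x)}$ so that the right-hand side equals $e^{(-u)(-x)}/\E(e^{(-u)\xi})$, which by Definition~\ref{app-def} applied to $\xi$ (with argument $-x$ and generating-variable $-u$) equals $\sum_{k}\frac{(-u)^k}{k!}Q^{(\xi)}_{k}(-x)=\sum_{k}\frac{u^k}{k!}(-1)^kQ^{(\xi)}_{k}(-x)$. Both series converge in a common neighbourhood of $u=0$ (the radius of convergence for $\eta$ is the same as for $\xi$ by symmetry of the relevant domains), so uniqueness of power-series coefficients gives the claim. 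Putting $x=0$ immediately yields $Q^{(\eta)}_{n}(0)=(-1)^nQ^{(\xi)}_{n}(0)$.

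For the case where $\xi$ has only moments up to $N$, I would instead run an induction using Definition~\ref{app-def2}. The base case $n=0$ is trivial since both sides are $1$, and $n=1$ follows from $Q^{(\eta)}_1(x)=x-\E(\eta)=x+\E(\xi)=-(\,-x-\E(\xi)\,)=-Q^{(\xi)}_1(-x)$. For the inductive step, define $R_n(x):=(-1)^nQ^{(\xi)}_{n}(-x)$ and check that the family $\{R_n\}$ satisfies the same defining recursion~\eqref{app-def2-2}--\eqref{app-def2-3} with $\xi$ replaced by $\eta$: differentiating, $\frac{d}{dx}R_n(x)=(-1)^n(-1)Q^{(\xi)\prime}_{n}(-x)=(-1)^{n+1}nQ^{(\xi)}_{n-1}(-x)=nR_{n-1}(x)$, so $R_n$ satisfies~\eqref{recursive} for $\eta$, and one verifies the normalization $\E(R_n(\eta))=(-1)^n\E(Q^{(\xi)}_{n}(-\eta))=(-1)^n\E(Q^{(\xi)}_{n}(\xi))=0$; by the uniqueness of Appell polynomials under the recursive ODE plus normalization (Remark~\ref{def-kyprianou}, or equivalently the argument in Proposition~\ref{equi-definitions}), $R_n=Q^{(\eta)}_n$.

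I do not anticipate any real obstacle here: the result is essentially a change of variables $(u,x)\mapsto(-u,-x)$ in the generating function, and the only point requiring a word of care is the bookkeeping of the two sign flips (one in $u$, one in $x$, producing the single factor $(-1)^n$) and the remark that $\eta$ inherits exponential moments (resp. moments up to $N$) from $\xi$, so that the relevant Appell polynomials are defined in the first place.
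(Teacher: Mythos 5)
Your argument for the exponential-moment case is exactly the paper's proof: rewrite $e^{ux}/\E(e^{u\eta})$ as $e^{(-u)(-x)}/\E(e^{(-u)\xi})$, expand via the generating function \eqref{appell} at $(-u,-x)$, and identify coefficients, so the proposal is correct and takes essentially the same route. Your additional induction handling the finite-moment setting of Definition~\ref{app-def2} is sound but goes beyond what the paper proves, which works only with the generating-function definition.
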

\begin{proof}
We have
\[\frac{e^{ux}}{\E(e^{u\eta})}=\sum_{n=0}^{\infty}\frac{u^n}{n!}Q_{n}^{(\eta)}(x),\]
and
\[\frac{e^{ux}}{\E(e^{u\eta})}=\frac{e^{-u(-x)}}{\E(e^{-u\xi})}=\sum_{n=0}^{\infty}\frac{u^n}{n!}(-1)^nQ_{n}^{(\xi)}(-x).\]
Hence, (\ref{relation}) holds.
\end{proof}
\begin{corollary}
If $\xi$ is a symmetric random variable then
\[Q^{(\xi)}_{n}(x)=(-1)^nQ^{(\xi)}_{n}(-x),\]
i.e., $Q^{(\xi)}_{2n+1}$ is an odd polynomial and $Q^{(\xi)}_{2n}$ is even, $n=1,2,\dots$ In particular, $Q^{(\xi)}_{2n+1}(0)=0$.
\end{corollary}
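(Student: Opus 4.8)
The plan is to apply Proposition~\ref{symmetric} with the observation that a symmetric random variable satisfies $\eta := -\xi \overset{d}{=} \xi$, and then argue that the Appell polynomials depend only on the distribution of the underlying random variable. First I would note that the defining relation for the Appell polynomials---either the generating function identity (\ref{appell}) in the exponential-moments case, or the recursion (\ref{app-def2-2})--(\ref{app-def2-3}) in the finite-moments case---involves $\xi$ only through its moments $\E(\xi^n)$ (equivalently, through $\E(e^{u\xi})$). Hence if $\eta$ and $\xi$ have the same law, then $Q_n^{(\eta)} = Q_n^{(\xi)}$ for all $n$. Combining this with (\ref{relation}) gives immediately
\[
Q_n^{(\xi)}(x) = Q_n^{(\eta)}(x) = (-1)^n Q_n^{(\xi)}(-x),
\]
which is the first assertion.

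From this identity the parity statements follow by inspection: taking $n = 2m$ gives $Q_{2m}^{(\xi)}(x) = Q_{2m}^{(\xi)}(-x)$, so $Q_{2m}^{(\xi)}$ is even, while taking $n = 2m+1$ gives $Q_{2m+1}^{(\xi)}(x) = -Q_{2m+1}^{(\xi)}(-x)$, so $Q_{2m+1}^{(\xi)}$ is odd. Evaluating the odd case at $x = 0$ yields $Q_{2m+1}^{(\xi)}(0) = -Q_{2m+1}^{(\xi)}(0)$, hence $Q_{2m+1}^{(\xi)}(0) = 0$.

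The only point requiring a word of care---and the mild ``obstacle'' such as it is---is the justification that $\xi \overset{d}{=} \eta$ implies $Q_n^{(\xi)} \equiv Q_n^{(\eta)}$; this is essentially trivial from the definitions (the moments, or the moment generating function, are law-invariant), but it should be stated explicitly since Proposition~\ref{symmetric} is phrased for the specific random variable $-\xi$ rather than for any random variable with the reflected law. Alternatively, one can avoid even this remark by working directly: symmetry of $\xi$ means $\E(\xi^{2k+1}) = 0$ for all $k$, so $\E(e^{u\xi})$ is an even function of $u$, whence $1/\E(e^{u\xi})$ is even and, by (\ref{appell}), the map $u \mapsto e^{-ux}/\E(e^{u\xi})$ reproduces the same coefficients as $u \mapsto e^{ux}/\E(e^{u\xi})$ up to the sign $(-1)^n$; matching coefficients gives (\ref{relation}) with $\eta$ replaced by $\xi$. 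Either route is short.
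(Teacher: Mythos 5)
Your proposal is correct and follows the same route the paper intends: the corollary is an immediate consequence of Proposition~\ref{symmetric} once one notes that symmetry gives $-\xi \overset{d}{=} \xi$ and that the Appell polynomials depend on $\xi$ only through its law, which is exactly the (unwritten) argument behind the paper's statement. Your explicit remark on law-invariance, and the alternative direct derivation via evenness of $\E(e^{u\xi})$, are fine but not a different method.
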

\begin{proposition}\label{propo-i.i.d}
Let $\xi_1$ and $\xi_2$ be two independent random variables and put $\xi:=\xi_1+\xi_2$
then
\begin{equation}{\label{independent}}
Q_{n}^{(\xi)}(x+y)=\sum_{k=0}^{n}\binom{n}{k}Q_{k}^{(\xi_1)}(x)Q_{n-k}^{(\xi_2)}(y).
\end{equation}
In particular,
\begin{equation}\label{independent-2}
\sum_{k=0}^{n}\binom{n}{k}Q_{k}^{(\xi_1)}(x)Q_{n-k}^{(\xi_2)}(-x)=\sum_{k=0}^{n}\binom{n}{k}Q_{k}^{(\xi_1)}(0)Q_{n-k}^{(\xi_2)}(0).
\end{equation}
\end{proposition}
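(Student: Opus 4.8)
The natural route is through the defining generating function \eqref{appell}. The point is that independence of $\xi_1$ and $\xi_2$ factorises the moment generating function: $\E(e^{u\xi})=\E(e^{u\xi_1})\,\E(e^{u\xi_2})$, and this identity holds on a common strip $|u|<\lambda$ on which all three variables have exponential moments. First I would therefore write, for $|u|$ small,
\[
\sum_{n=0}^{\infty}\frac{u^n}{n!}Q_{n}^{(\xi)}(x+y)
=\frac{e^{u(x+y)}}{\E(e^{u\xi})}
=\frac{e^{ux}}{\E(e^{u\xi_1})}\cdot\frac{e^{uy}}{\E(e^{u\xi_2})}
=\sum_{k=0}^{\infty}\frac{u^k}{k!}Q_{k}^{(\xi_1)}(x)\sum_{l=0}^{\infty}\frac{u^l}{l!}Q_{l}^{(\xi_2)}(y).
\]
Both series on the right converge absolutely in a neighbourhood of the origin, so the product may be rearranged as a Cauchy product; substituting $n=k+l$ and collecting the coefficient of $u^n/n!$ gives exactly \eqref{independent}. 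This mirrors the computation already used for \eqref{series-exp} in Proposition~\ref{series-exp} and for Proposition~\ref{symmetric}.

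To cover the case in which $\xi_1$ and $\xi_2$ only have moments up to $N$ (then $\xi=\xi_1+\xi_2$ also has moments up to $N$ by the binomial theorem and independence, so all polynomials appearing in \eqref{independent} are defined via Definition~\ref{app-def2}), I would argue by induction on $n$, as in Proposition~\ref{equi-definitions}. The cases $n=0,1$ are immediate from \eqref{app-def2-1}--\eqref{app-def2-3}. For the inductive step, differentiate both sides of \eqref{independent} in $x$ with $y$ held fixed; using the recursive differential equation \eqref{recursive} the left side becomes $nQ_{n-1}^{(\xi)}(x+y)$ and the right side becomes $n\sum_{j=0}^{n-1}\binom{n-1}{j}Q_{j}^{(\xi_1)}(x)Q_{n-1-j}^{(\xi_2)}(y)$, and these agree by the induction hypothesis. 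Hence the difference $D_n(x,y):=Q_{n}^{(\xi)}(x+y)-\sum_{k=0}^{n}\binom{n}{k}Q_{k}^{(\xi_1)}(x)Q_{n-k}^{(\xi_2)}(y)$ does not depend on $x$; by the symmetric computation in $y$ it does not depend on $y$ either, so $D_n\equiv c_n$ is constant.

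To pin down $c_n=0$ I would invoke the mean value property \eqref{meanvalue}: replace $x$ by $x+\xi_1$ and $y$ by $y+\xi_2$ in the identity $D_n(x,y)=c_n$ and take expectations, using Fubini and independence. The left-hand side yields $\E\big(Q_{n}^{(\xi)}(x+y+\xi)\big)-\sum_{k}\binom{n}{k}\E\big(Q_{k}^{(\xi_1)}(x+\xi_1)\big)\E\big(Q_{n-k}^{(\xi_2)}(y+\xi_2)\big)=(x+y)^n-\sum_{k}\binom{n}{k}x^k y^{n-k}=0$, so $c_n=0$, completing the induction. Finally, the special case \eqref{independent-2} follows by evaluating \eqref{independent} twice: putting $y=-x$ gives $Q_n^{(\xi)}(0)=\sum_k\binom{n}{k}Q_k^{(\xi_1)}(x)Q_{n-k}^{(\xi_2)}(-x)$, while putting $x=y=0$ gives $Q_n^{(\xi)}(0)=\sum_k\binom{n}{k}Q_k^{(\xi_1)}(0)Q_{n-k}^{(\xi_2)}(0)$; equating the two right-hand sides yields \eqref{independent-2}. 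The only genuinely delicate point is the finite-moment case, where no convergence argument is available and one must correctly identify the integration constant — but the mean value property disposes of it cleanly.
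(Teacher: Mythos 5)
Your first paragraph is exactly the paper's proof: independence factorises the moment generating function, and the Cauchy product of the two absolutely convergent series (substituting $n=k+l$) gives \eqref{independent}; \eqref{independent-2} then follows, as you say, by evaluating at $y=-x$ and at $x=y=0$ and noting both right-hand sides equal $Q_n^{(\xi)}(0)$. Where you go beyond the paper is the finite-moment case: the paper only argues through the generating function, so its proof implicitly assumes exponential moments, whereas your induction (differentiate in $x$ and in $y$ using \eqref{recursive} to reduce to the case $n-1$, conclude the difference is a constant, then kill the constant by applying the mean value property to $x+\xi_1$, $y+\xi_2$ and using independence and Fubini) extends \eqref{independent} to random variables defined only via Definition~\ref{app-def2}. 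That extra step is sound — it leans on the paper's Case~2 proof of \eqref{meanvalue}, which is available under moments up to $N$, and the mixed moments needed for Fubini are finite by independence — and it buys genuine additional generality at the modest cost of a longer argument; the paper's route is shorter but confined to the exponential-moment setting.
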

\begin{proof} (see also \citep{Sa})
By the independence of $\xi_1$ and $\xi_2$, we have 
\[\frac{e^{u(x+y)}}{\E(e^{u\xi})}=\frac{e^{ux}}{\E(e^{u\xi_1})}\frac{e^{uy}}{\E(e^{u\xi_2})}.\]
 Hence
 \begin{align*}
\sum_{n=0}^{\infty}\frac{u^n}{n!}Q_{n}^{(\xi)}(x+y)&=\sum_{k=0}^{\infty}\frac{u^{k}}{k!}Q_{k}^{(\xi_1)}(x)\sum_{l=0}^{\infty}\frac{u^l}{l!}Q_{l}^{(\xi_2)}(y)\\
&=\sum_{k=0}^{\infty}\sum_{l=0}^{\infty}\frac{u^{k+l}}{k! l!}Q_{k}^{(\xi_1)}(x)Q_{l}^{(\xi_2)}(y).
\end{align*}
Substituting $n=k+l$ yields (\ref{independent}). Notice that by choosing $\xi_2\equiv 0$ then $Q_{k}^{(0)}(x)=x^k$ the series expansion (\ref{series-exp}) can be obtained from (\ref{independent}).
\end{proof}
\begin{corollary}
Let $\xi_1, \xi_2$ and $\xi$ be as in Proposition \ref{propo-i.i.d}. Then
\begin{equation}\label{indepen-represen}
Q_{n}^{(\xi_1)}(x)=\int_{-\infty}^{\infty}Q_{n}^{(\xi)}(x+y)\Pro(\xi_2\in dy).
\end{equation}
\end{corollary}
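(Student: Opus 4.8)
The plan is to obtain (\ref{indepen-represen}) by integrating the identity (\ref{independent}) of Proposition \ref{propo-i.i.d} against the law of $\xi_2$ and then invoking the mean value property. Concretely, I keep $y$ free in (\ref{independent}),
\begin{equation*}
Q_{n}^{(\xi)}(x+y)=\sum_{k=0}^{n}\binom{n}{k}Q_{k}^{(\xi_1)}(x)\,Q_{n-k}^{(\xi_2)}(y),
\end{equation*}
and integrate both sides with respect to $\Pro(\xi_2\in dy)$. Since $\xi_2$ has finite moments up to the relevant order and each $Q_{n-k}^{(\xi_2)}$ is a polynomial of degree $n-k$, every summand on the right is $\Pro(\xi_2\in\cdot)$–integrable, so the interchange of the finite sum with the integral is immediate.

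After this interchange the right-hand side becomes $\sum_{k=0}^{n}\binom{n}{k}Q_{k}^{(\xi_1)}(x)\,\E\big(Q_{n-k}^{(\xi_2)}(\xi_2)\big)$. Now I apply the mean value property (\ref{meanvalue}) to $\xi_2$ with $x=0$: for $n-k\geq 1$ one has $\E\big(Q_{n-k}^{(\xi_2)}(\xi_2)\big)=0$, while for $n-k=0$ the term equals $\E\big(Q_{0}^{(\xi_2)}(\xi_2)\big)=1$ since $Q_{0}^{(\xi_2)}\equiv 1$. Hence only the $k=n$ term survives, and it equals $Q_{n}^{(\xi_1)}(x)$, which is exactly (\ref{indepen-represen}).

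There is essentially no hard step here; the only points requiring a word of care are the bookkeeping around the surviving $k=n$ term (remembering that $\E(Q_0^{(\xi_2)}(\xi_2))=1$, not $0$) and the integrability needed to pass the integral through the sum, both of which are routine. Alternatively, and perhaps more transparently, one can argue at the level of generating functions: integrating (\ref{appell}) written for $\xi$ against $\Pro(\xi_2\in dy)$ and using independence gives
\begin{equation*}
\int_{-\infty}^{\infty}\frac{e^{u(x+y)}}{\E(e^{u\xi})}\,\Pro(\xi_2\in dy)=\frac{e^{ux}\,\E(e^{u\xi_2})}{\E(e^{u\xi_1})\,\E(e^{u\xi_2})}=\frac{e^{ux}}{\E(e^{u\xi_1})},
\end{equation*}
valid for $|u|$ small; expanding both sides in powers of $u$ (the interchange of $\int$ and $\sum$ being justified by absolute convergence for small $|u|$) and matching the coefficient of $u^n/n!$ yields the claim. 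I would present the first argument as the main proof, since it needs no assumption on $\xi$ beyond finiteness of the moments already in force in Proposition \ref{propo-i.i.d}.
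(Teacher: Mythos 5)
Your proof is correct: the paper states this corollary without an explicit proof, and your main argument—integrating the identity (\ref{independent}) against $\Pro(\xi_2\in dy)$ and using the mean value property (\ref{meanvalue}) so that $\E\big(Q_{n-k}^{(\xi_2)}(\xi_2)\big)$ kills every term except $k=n$—is precisely the intended immediate consequence of Proposition \ref{propo-i.i.d}. The generating-function variant you sketch is an equally valid route under the exponential-moment assumption already used in that proposition.
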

\begin{remark}
If $\xi_1, \xi_2$ are i.i.d symmetric random variables then $\xi=\xi_1+\xi_2$ is also symmetric. Therefore, from \eqref{independent-2} and Corollary \ref{symmetric} , we have
\[0=Q_{2n+1}^{(\xi)}(0)=\sum_{k=0}^{2n+1}\binom{2n+1}{k}(-1)^kQ_{k}^{(\xi_1)}(x)Q_{2n+1-k}^{(\xi_1)}(-x).\] 
\end{remark}
\subsubsection{Moment representation} 
 
Let $\eta$ be a complex or real random variable with finite moments of all orders. Clearly, the polynomial
\[\hat{Q}_n(x):=\E(x+\eta)^n\]
satisfies Appell differential equation, see (\ref{recursive}): 
$$\displaystyle\frac{d}{dx}\hat{Q}_{n}(x)=n\hat{Q}_{n-1}(x).$$
A natural question is whether there exists a random variable $\theta$ such that
\[Q_{n}^{(\theta)}(x)=\hat{Q}_n(x),\]
i.e., 
\begin{equation}\label{moment-repre-1}
Q_{n}^{(\theta)}(x)=\E(x+\eta)^n.
\end{equation}
This equality is then called the {\it moment representation} of the Appell polynomial. 
\begin{proposition}
The only pairs of real random variables satisfying (\ref{moment-repre-1}) are the deterministic ones, i.e., $\theta=-\eta=c$ for some constant $c\in \R$.
\end{proposition}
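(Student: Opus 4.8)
The plan is to turn the functional identity (\ref{moment-repre-1}) into a relation among the moments of $\theta$ and $\eta$, and then extract everything needed from just its two instances $n=1$ and $n=2$. First I would note that, writing $m_j:=\E(\eta^j)$, one has $\E(x+\eta)^n=\sum_{k=0}^{n}\binom{n}{k}m_k\,x^{n-k}$, whereas by (\ref{appell-rep}) we also have $Q_n^{(\theta)}(x)=\sum_{k=0}^{n}\binom{n}{k}Q_k^{(\theta)}(0)\,x^{n-k}$. Matching the coefficient of $x^{n-k}$, the identity (\ref{moment-repre-1}) for all $x$ and all $n$ is equivalent to
\[Q_j^{(\theta)}(0)=m_j=\E(\eta^j),\qquad j=0,1,2,\dots\]
In particular $\theta$ must possess finite moments of every order, so that all the polynomials $Q_n^{(\theta)}$ are well defined in the sense of Definition \ref{app-def2}.

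Next I would invoke the normalisation $\E\big(Q_k^{(\theta)}(\theta)\big)=0$ for $k\ge1$, which is the case $x=0$ of the mean value property (\ref{meanvalue}). Expanding it by (\ref{appell-rep}) and inserting $Q_n^{(\theta)}(0)=\E(\eta^n)$ gives
\[0=\E\big(Q_k^{(\theta)}(\theta)\big)=\sum_{n=0}^{k}\binom{k}{n}Q_n^{(\theta)}(0)\,\E(\theta^{k-n})=\sum_{n=0}^{k}\binom{k}{n}\E(\eta^n)\,\E(\theta^{k-n}),\qquad k\ge1.\]
The case $k=1$ reads $\E(\theta)+\E(\eta)=0$. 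The case $k=2$ reads $\E(\theta^2)+2\E(\theta)\E(\eta)+\E(\eta^2)=0$, and substituting $\E(\eta)=-\E(\theta)$ turns it into
\[\operatorname{Var}(\theta)+\operatorname{Var}(\eta)=\big(\E(\theta^2)-(\E\theta)^2\big)+\big(\E(\eta^2)-(\E\eta)^2\big)=0.\]

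Since both variances are nonnegative, this forces $\operatorname{Var}(\theta)=\operatorname{Var}(\eta)=0$, so $\theta$ and $\eta$ are a.s.\ constant, say $\theta\equiv c_1$ and $\eta\equiv c_2$, and then $c_2=\E(\eta)=-\E(\theta)=-c_1$; hence $\theta=-\eta=c$ with $c:=c_1$. Conversely, if $\theta\equiv c$ then (\ref{appell}) gives $\sum_{n}\frac{u^n}{n!}Q_n^{(\theta)}(x)=e^{ux}/e^{uc}=e^{u(x-c)}$, so $Q_n^{(\theta)}(x)=(x-c)^n=\E(x+\eta)^n$ for $\eta\equiv-c$; thus the deterministic pairs do satisfy (\ref{moment-repre-1}) and are exactly the solutions.

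I do not expect a genuine obstacle; the only point requiring care is to resist arguing via moment generating functions — from (\ref{moment-repre-1}) one formally gets $\E(e^{u\theta})\,\E(e^{u\eta})\equiv1$ near the origin, whence differentiating $\log$ twice would give $\operatorname{Var}(\theta)+\operatorname{Var}(\eta)=0$ at once — because $\theta$ and $\eta$ are only assumed to have finite moments, not exponential moments. Routing the argument through the moment recursion and the normalisation $\E(Q_k^{(\theta)}(\theta))=0$, as above, avoids that assumption entirely.
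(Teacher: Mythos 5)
Your proof is correct and takes essentially the same route as the paper: both rest on the normalisation $\E\big(Q_n^{(\theta)}(\theta)\big)=0$, which under (\ref{moment-repre-1}) says exactly that $\sum_{k}\binom{n}{k}\E(\eta^{k})\E(\theta^{n-k})$, i.e.\ the $n$-th moment of $\theta+\eta$ under the product measure, vanishes, and both extract the conclusion from the second-moment case. The only difference is in the finish: the paper deduces from $\E_{\theta\times\eta}\big((\theta+\eta)^2\big)=0$ that $\theta+\eta=0$ a.s.\ and then invokes independence to force constancy, whereas you expand the same identity in moments and combine the $k=1$ and $k=2$ cases to get $\operatorname{Var}(\theta)+\operatorname{Var}(\eta)=0$ directly — a marginally more elementary ending — and your explicit coefficient matching $Q_j^{(\theta)}(0)=\E(\eta^j)$ and the converse verification are harmless additions.
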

\begin{proof}
The Appell polynomial $Q_{n}^{(\theta)}$ associated with $\theta$ satisfies $$\E(Q_{n}^{(\theta)}(\theta))=0.$$ 
Hence, from (\ref{moment-repre-1})
\begin{equation}\label{product-exp}
\E_{\theta\times\eta}((\theta+\eta)^n)=0,
\end{equation}
where $\E_{\theta\times\eta}$ denotes the product measure induced by $\theta$ and $\eta$. Consequently, taking $n=2$, it is seen that (\ref{product-exp}) holds if and only if \begin{equation}\label{product-pro}
\theta+\eta=0,\quad \Pro_{\theta\times\eta}-a.s. 
\end{equation}
But $\theta$ and $\eta$ are independent under $\Pro_{\theta\times\eta}$ and, hence, it follows from (\ref{product-pro}) that there exists a constant $c$ such that $\theta=-\eta=c$.
\end{proof}
Although there does not exists non-constant real random variables satisfying (\ref{moment-repre-1}) it is possible in some interesting cases to find for a given real $\theta$ a complex random variable $\eta$ such that (\ref{moment-repre-1}) holds. To develop this, consider the following, 

\begin{proposition}\label{moment-propo}
Let $\theta$ be a random variable with the moment generating function
\[M_{\theta}(u):= \E(e^{u\theta}),\]
which is assumed to be well defined and positive for all $u\in\R$. Then if 
\[\psi(u):=\frac{1}{M_{\theta}(u)}\]
is the characteristic function of a real random variable $\zeta$ having finite moments of all orders then
\begin{equation}\label{moment-repre-2}
Q_{n}^{(\theta)}(x)=\E(x+i\zeta)^n,
\end{equation}
where $i$ is the imaginary unit. In particular, $$Q_{2n}^{(\theta)}(0)=(-1)^n\E(\zeta^{2n}).$$
\end{proposition}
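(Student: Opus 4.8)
The plan is to identify the two sides of (\ref{moment-repre-2}) through their generating functions. Since $M_{\theta}$ is finite on all of $\R$, the random variable $\theta$ has exponential moments, so Definition \ref{app-def} applies to it, and, exactly as in the discussion preceding that definition, $1/M_{\theta}$ is analytic on some disc $\{|u|<\rho\}$ with $\rho>0$ (recall $M_{\theta}(0)=1$). Thus
\[\sum_{n=0}^{\infty}\frac{u^n}{n!}Q_{n}^{(\theta)}(x)=\frac{e^{ux}}{M_{\theta}(u)}=e^{ux}\,\psi(u),\qquad |u|<\rho ,\]
and it remains to expand $e^{ux}\psi(u)$ as a power series in $u$ and read off the coefficients.

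First I would pin down the Taylor expansion of $\psi$ at the origin. On $\{|u|<\rho\}$ the function $\psi=1/M_{\theta}$ is analytic, hence equals its Taylor series there; on the other hand $\psi$ is, by hypothesis, the characteristic function of the real random variable $\zeta$, which has finite moments of every order, so the standard differentiation rule for characteristic functions gives $\psi^{(k)}(0)=i^{k}\E(\zeta^{k})$ for every $k$. Combining the two facts,
\[\psi(u)=\sum_{k=0}^{\infty}\frac{u^{k}}{k!}\,\E((i\zeta)^{k}),\qquad |u|<\rho ,\]
the series being absolutely convergent on that disc. (Equivalently: since its characteristic function extends analytically across $0$, the variable $\zeta$ has exponential moments, and $\E(e^{iu\zeta})$ may then be expanded term by term by Fubini.)

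Next, multiplying by $e^{ux}=\sum_{l\ge 0}\frac{u^{l}}{l!}x^{l}$ and forming the Cauchy product of the two absolutely convergent series — just as in (\ref{App-0}) and in the proof of (\ref{series-exp}) — I obtain
\[e^{ux}\psi(u)=\sum_{n=0}^{\infty}\frac{u^{n}}{n!}\sum_{k=0}^{n}\binom{n}{k}x^{n-k}\,\E((i\zeta)^{k})=\sum_{n=0}^{\infty}\frac{u^{n}}{n!}\,\E((x+i\zeta)^{n}).\]
Comparing the coefficient of $u^{n}$ with the first displayed identity yields $Q_{n}^{(\theta)}(x)=\E((x+i\zeta)^{n})$ for every $n$ and every $x\in\R$, which is (\ref{moment-repre-2}); setting $x=0$, $n=2m$ and using $(i\zeta)^{2m}=(-1)^{m}\zeta^{2m}$ gives $Q_{2m}^{(\theta)}(0)=(-1)^{m}\E(\zeta^{2m})$.

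I expect the only genuinely delicate point to be the middle step: verifying that the Taylor coefficients at $0$ of the analytic function $1/M_{\theta}$ are exactly $i^{k}\E(\zeta^{k})/k!$, that is, that the complex-analytic description of $1/M_{\theta}$ is compatible with the real-variable moment formula $\psi^{(k)}(0)=i^{k}\E(\zeta^{k})$. Once that is secured, the rest is the familiar manipulation of absolutely convergent double series used repeatedly in Section 2. As a consistency check one may also observe that $\psi$ is real-valued on $\R$, so $\zeta$ is symmetric and all its odd moments vanish, which is compatible with $Q_{n}^{(\theta)}(x)=\E((x+i\zeta)^{n})$ being a real polynomial.
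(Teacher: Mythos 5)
Your proposal is correct and follows essentially the same route as the paper: both sides are identified through the generating function $e^{ux}/M_{\theta}(u)$, expanding $\psi(u)=\E(e^{iu\zeta})$ into the moment series of $\zeta$ and comparing coefficients of $u^{n}/n!$ with the Appell expansion (\ref{appell}). The only difference is that you make explicit the justification (analyticity of $1/M_{\theta}$ near $0$ together with $\psi^{(k)}(0)=i^{k}\E(\zeta^{k})$, then a Cauchy product) that the paper's one-line computation $\E(e^{u(x+i\zeta)})=\sum_{n}\frac{u^{n}}{n!}\E(x+i\zeta)^{n}$ leaves implicit.
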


\begin{proof}
We have
\begin{equation}\label{eq-1}
\frac{e^{ux}}{M_{\theta}(u)}=\frac{e^{ux}}{\E(e^{u\theta})}=\sum_{n=0}^{\infty}\frac{u^n}{n!}Q_{n}^{(\theta)}(x),
\end{equation}
and 
\begin{align}\label{eq-2}
\nonumber\frac{e^{ux}}{M_{\theta}(u)}=e^{ux}\E(e^{iu\zeta})&=\E(e^{u(x+i\zeta)})\\
&=\sum_{n=0}^{\infty}\E(x+i\zeta)^n.
\end{align}
The claim follows now from (\ref{eq-1}) and (\ref{eq-2}).
\end{proof}
\begin{remark}
Notice that $\zeta$ in Proposition  \ref{moment-propo}  is symmetric (since its characteristic function is real valued). Hence, we have the relationship
\[\frac{1}{\E(e^{u\theta})}=\E(e^{iu\zeta})=\E(\cos(u\zeta)),\]
and, consequently, also $\theta$ must be symmetric. In fact, for symmetric $\theta$ having positive moment generating function for all $u\in\R$ it holds
\item{(i)} $\displaystyle\frac{d}{du}M_{\theta}(u)|_{u=0}=0$,
\item{(ii)} $u\mapsto M_{\theta}(u)$ is convex, increasing for $u\geq 0$ and decreasing for $u\leq 0$. 
\item{(iii)} $M_{\theta}(u)\geq 1$ for all $u$ and $\lim_{u\rightarrow\infty}M_{\theta}(u)=+\infty$.
\end{remark}
In the next section we give several examples of random variables such that their Appell polynomials have the moment representations.

\subsubsection{ Appell polynomials with unique positive root} \label{root}
In this section we will give  conditions for a random variable which guarantee that its Appell polynomials have a unique positive root. This property plays an important role in studying optimal stopping problems of power reward function for L\'evy processes, for more details we refer to \cite{NS, KS, Sa}.

Let us first recall Descartes' rule of signs (see, e.g., \cite{wang}): {\it Let $p_n(x)=\sum_{k=0}^{n}\lambda_{k} x^{\sigma_{k}}$ be a polynomial with non-zero real coefficients $\lambda_{k}$ and powers $\sigma_k$ which are integers satisfying $0\leq\sigma_0<\sigma_1<\dots<\sigma_n.$ Then the number of positive zeros of $p_n(x)$ (counted with multiplicities) is either equal to the number of variations in sign in the sequence $\{\lambda_k\}$ or less than that by an even integer number.}

\begin{proposition}
Let $\xi$ be a non-negative random variable having the moments up to $n$. If the following conditions hold for all $a>0$
\begin{description}
\item{(i)}  
$\quad \Pro(\xi<a)>0$,
\item{(ii)}
$\quad \E(Q_{k}^{(\xi)}(\xi)1_{\{\xi\geq a \}})>0, \quad k=1,2,\dots,n$,
\end{description}
then the Appell polynomials $Q_{k}^{(\xi)}$, $k=1,2,\dots,n$ have a unique positive root.
\end{proposition}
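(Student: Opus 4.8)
The plan is to reduce the statement to a sign count on the coefficients of each $Q_k^{(\xi)}$ and then invoke Descartes' rule of signs, just recalled. By (\ref{appell-rep}) we have $Q_k^{(\xi)}(x)=\sum_{j=0}^{k}\binom{k}{j}Q_{k-j}^{(\xi)}(0)x^{j}$, so the coefficient of $x^{j}$ is the strictly positive number $\binom{k}{j}$ times $Q_{k-j}^{(\xi)}(0)$; in particular the leading coefficient equals $Q_0^{(\xi)}(0)=1$ and the coefficient of $x^{k-1}$ equals $k\,Q_1^{(\xi)}(0)$. Hence the whole problem reduces to the two facts
\[
Q_{j}^{(\xi)}(0)\le 0\quad(j=1,\dots,n),\qquad Q_{1}^{(\xi)}(0)<0.
\]
Granting these, listing the non-zero coefficients of $Q_k^{(\xi)}$ by increasing power gives a (possibly empty) block of strictly negative numbers, then $k\,Q_1^{(\xi)}(0)<0$, then $1>0$, so there is exactly one variation of sign. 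Descartes' rule then forces the number of positive zeros of $Q_k^{(\xi)}$, counted with multiplicity, to equal $1$, which is precisely the claim (existence, uniqueness and simplicity of the positive root).

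The probabilistic hypotheses enter only in the proof of $Q_{j}^{(\xi)}(0)\le 0$, which I would argue as follows. Putting $x=0$ in the mean value property (\ref{meanvalue}) gives $\E(Q_j^{(\xi)}(\xi))=0$, so hypothesis (ii) is equivalent to $\E\big(Q_j^{(\xi)}(\xi)1_{\{\xi<a\}}\big)<0$ for every $a>0$. Suppose for contradiction that $Q_j^{(\xi)}(0)>0$; by continuity there is $\delta>0$ with $Q_j^{(\xi)}\ge 0$ on $[0,\delta]$, and since $\xi\ge 0$ almost surely this yields $Q_j^{(\xi)}(\xi)1_{\{\xi<\delta\}}\ge 0$ a.s., hence $\E\big(Q_j^{(\xi)}(\xi)1_{\{\xi<\delta\}}\big)\ge 0$, contradicting the rephrased (ii) at $a=\delta$. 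Therefore $Q_j^{(\xi)}(0)\le 0$. For the strict inequality at $j=1$, note $Q_1^{(\xi)}(x)=x-\E(\xi)$, so $Q_1^{(\xi)}(0)=-\E(\xi)$; and $\E(\xi)=0$ would force $\xi\equiv 0$, whence $\E\big(Q_1^{(\xi)}(\xi)1_{\{\xi\ge a\}}\big)=0$, contradicting (ii). Hence $\E(\xi)>0$ and $Q_1^{(\xi)}(0)<0$.

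The step needing the most care is not a computation but the sign-variation bookkeeping at the end of the first paragraph: I only obtain $Q_j^{(\xi)}(0)\le 0$, and equality is genuinely possible (for $\xi$ exponential one has $Q_2^{(\xi)}(x)=x^2-2x$), so some intermediate coefficients may vanish, and one must verify that deleting them leaves a single sign change. This holds precisely because every coefficient up to $x^{k-1}$ is $\le 0$ (so the surviving ones never interleave with positive coefficients) and the coefficient of $x^{k-1}$ is \emph{strictly} negative (so the one change, negative to positive, really occurs). I would also remark that hypothesis (i) is in fact contained in (ii): if $\Pro(\xi<a)=0$ for some $a>0$ then $\E\big(Q_1^{(\xi)}(\xi)1_{\{\xi<a\}}\big)=0$, which is incompatible with $\E(Q_1^{(\xi)}(\xi))=0$ and $\E\big(Q_1^{(\xi)}(\xi)1_{\{\xi\ge a\}}\big)>0$; thus (i) only isolates a natural non-degeneracy property. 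Finally, an alternative to Descartes is an induction on $k$ using $\tfrac{d}{dx}Q_k^{(\xi)}=kQ_{k-1}^{(\xi)}$: if $Q_{k-1}^{(\xi)}$ is negative on $(0,x_{k-1})$ and positive past its unique simple root $x_{k-1}$, then $Q_k^{(\xi)}$ decreases on $[0,x_{k-1}]$ and increases on $[x_{k-1},\infty)$, and $Q_k^{(\xi)}(0)\le 0$ leaves exactly one positive root, located in $(x_{k-1},\infty)$.
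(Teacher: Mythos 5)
Your proof is correct and follows essentially the same route as the paper: derive $Q_{j}^{(\xi)}(0)\le 0$ from hypothesis (ii) together with $\E(Q_{j}^{(\xi)}(\xi))=0$, and then apply Descartes' rule of signs to the expansion $Q_{k}^{(\xi)}(x)=\sum_{j}\binom{k}{j}Q_{k-j}^{(\xi)}(0)x^{j}$. You are in fact more careful than the paper on the one point where its bookkeeping is loose: you check that $Q_{1}^{(\xi)}(0)=-\E(\xi)$ is \emph{strictly} negative (ruling out the degenerate case in which all lower coefficients vanish and $Q_{k}^{(\xi)}(x)=x^{k}$ has no positive root, so that the sign sequence really does have exactly one variation), and your $\ge 0$ versus $<0$ contradiction shows, as you note, that hypothesis (i) is already a consequence of (ii) and the mean value property.
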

\begin{proof}
We show that from condition (ii) it follows that $Q_{k}^{(\xi)}(0)\leq 0$, $k=1,2\dots, n$. Indeed, assume that $Q_{k}^{(\xi)}(0)>0$, by the continuity of $Q_{k}^{(\xi)}$ there exists $\varepsilon>0$ and $\delta(\varepsilon)>0$ such that $Q_{k}^{(\xi)}(z)>\varepsilon$ for all $z\in (-\delta(\varepsilon), \delta(\varepsilon))$. Choosing $a\in(0, \delta(\varepsilon)) $ we have by (i)
\begin{equation}\label{uni-2}
\E(Q_{k}^{(\xi)}(\xi)1_{\{\xi< a \}})>\varepsilon \  \Pro(\xi<a)>0.
\end{equation}
Since $\E(Q_{k}^{(\xi)}(\xi))=0$ it follows  from (ii)
\[\E(Q_{k}^{(\xi)}(\xi)1_{\{\xi< a\}})=-\E(Q_{k}^{(\xi)}(\xi)1_{\{\xi\geq a \}})<0,\]
which contradicts \eqref{uni-2}. Hence $Q_{k}^{(\xi)}(0)\leq 0$ for all $k=1, 2\dots,n$.
Furthermore, we have $Q_{0}^{(\xi)}(0)=1$ and
$\displaystyle Q_{k}^{(\xi)}(x)=\sum_{i=0}^{k}\binom{k}{i}Q_{i}^{(\xi)}(0)x^{k-i}$. Consequently, the coefficients of $Q_{k}^{(\xi)}$ have only one change of sign. By Descartes' rule of signs we conclude that $Q_{k}^{(\xi)}, k=1,2\dots,n$ have a unique positive zero.
\end{proof}

\section{Examples of Appell polynomials}
In this section we consider some well known Appell polynomials, e.g., Bernoulli, Euler, Hermite and Laguerre polynomials and connect them with random variables. We use the characterization via random variables to review some properties of these polynomials. 

\subsection{Bernoulli polynomials and uniform distribution}
Let $\xi$ $\sim$ {\rm U}$(0,1).$  Then 
\[\E(e^{u\xi})=\frac{e^u-1}{u}.\]
The Bernoulli polynomials are introduced via (see \citep[p 804]{AS})
\[\frac{ue^{ux}}{e^u-1}=\sum_{n=0}^{\infty}\frac{u^n}{n!}B_n(x).\]
Hence, the Appell polynomials associated with the uniformly distributed random variable coincide with the Bernoulli polynomials. The numbers $\tilde{B}_{k}:=B_{k}(0), k=0,1,\dots $ are called the Bernoulli numbers. We have $$\E(\xi^k)=\int_{0}^{1}y^kdy=1/(k+1), \quad k=1,2,\dots.$$ 
From (\ref{app-def2-3}) we obtain a recursive formula for the Bernoulli numbers: 
\begin{equation}\label{ber-numb}
\tilde{B}_n=-\sum_{k=0}^{n-1}\binom{n}{k}\tilde{B}_k\frac{1}{n-k+1},
\end{equation}
which gives $$\tilde{B}_0=1, \tilde{B}_1=-\frac{1}{2}, \tilde{B}_2=\frac{1}{6}, B_3=0, B_4=-1/30\dots.$$
Notice also that
\begin{equation}\label{even-func}
\frac{u}{e^u -1}+\frac{u}{2}-1 =\sum_{n=2}^{\infty}\frac{u^n}{n!}B_n(0).
\end{equation}
Since the left hand side of (\ref{even-func}) defines an even function it follows that $\tilde{B}_{2k+1}=0$ for $k=1,2,\dots$.
Furthermore, from (\ref{appell-rep}) we have
\[B_{n}(x)=\sum_{m=0}^{n}\binom{n}{m}\tilde{B}_{n-m}x^m.\]
We now exploit the properties of the Appell polynomials under the probabilistic approach to provide new proofs of many properties of the  Bernoulli polynomials; some of these can be found in \citep[p 804]{AS}). First we prove the relationship between the cumulants $\kappa_n$ of $\xi$ and the Bernoulli numbers $\tilde{B}_n$. Clearly, the first cumulant $$\kappa_1=\E(\xi)=1/2=-\tilde{B}_1.$$

\noindent
{\it Property {\rm (i)}.} {\it Cumulant relationship:}
\begin{equation}\label{cum-ber-numb}
\kappa_{n+1}=\frac{\tilde{B}_{n+1}}{n+1}, \quad n=1,2\dots.
\end{equation}

\begin{proof}
From (\ref{ber-numb})
\begin{align*}
\tilde{B}_{n+1}&=-\sum_{j=0}^{n}\binom{n+1}{j}\tilde{B}_j\frac{1}{n-j+2}\\
&=-(n+1)\sum_{j=0}^{n}\binom{n}{j}\tilde{B}_j\frac{1}{n-j+1}\frac{1}{n-j+2}\\
&=-(n+1)\sum_{j=0}^{n}\binom{n}{j}\tilde{B}_j\frac{1}{n-j+1}+(n+1)\sum_{j=0}^{n}\binom{n}{j}\tilde{B}_j\frac{1}{n-j+2}.
\end{align*}
In the last equality, the first sum is 0 by (\ref{ber-numb}),  and using (\ref{cumulant-mu-q}) the second sum equals $\kappa_{n+1}$ and we therefore obtain formula (\ref{cum-ber-numb}).
\end{proof}
\noindent
{\it Property {\rm(ii)}.} {\it First difference property:}
\begin{equation}\label{diff-B}
B_n(x+1)-B_n(x)=nx^{n-1},
\end{equation}
{\it which yields}
\[\sum_{k=1}^{m}k^n=\frac{B_{n+1}(m+1)-B_n(m)}{n+1}.\]
\begin{proof}
Using the mean value property of the Appell polynomials we get
\begin{align*}
x^{n-1}=\E(Q_{n-1}^{(\xi)}(x+\xi))&=\sum_{m=0}^{n-1}\binom{n-1}{m}\tilde{ B}_m\E(x+\xi)^{n-1-m}\\
&=\sum_{m=0}^{n-1}\binom{n-1}{m}\tilde{B}_m\frac{1}{n-m}\Big[(x+1)^{n-m}-x^{n-m}\Big]\\
&=\frac{1}{n}\sum_{m=0}^{n-1}\binom{n}{m}\tilde{B}_m\Big[(x+1)^{n-m}-x^{n-m}\Big]\\
&=\frac{1}{n}[B_n(x+1)-B_n(x)],
\end{align*}
implying (\ref{diff-B}).
\end{proof}
\noindent
{\it Property {\rm (iii)}.} {\it Symmetry:}
\begin{equation}\label{symmetry}
B_n(1-x)=(-1)^{n}B_n(x).
\end{equation}
\begin{proof}
Set $\eta:=1-\xi$. Then also $\eta\sim {\rm U}(0,1)$.  Therefore, the Appell polynomials associated with $\eta$ are also the Bernoulli polynomials $B_n(x)$.\\
On the other hand,
\[\frac{e^{ux}}{\E(e^{u\eta})}=\frac{e^{u(x-1)}}{\E(e^{u(-\xi)})},\]
and combining this with (\ref{relation}) gives
\[Q_{n}^{(\eta)}(x)=Q_{n}^{(-\xi)}(x-1)=(-1)^{n}Q_{n}^{(\xi)}(1-x),\]
proving (\ref{symmetry}).\\
\end{proof}
\noindent
{\it Property {\rm (iv)}.} {\it Second difference property:}
\[(-1)^nB_n(-x)-B_n(x)=nx^{n-1}.\]
\begin{proof}
This can be obtained from (\ref{diff-B}) and (\ref{symmetry}).
\end{proof}
\noindent
{\it Property {\rm(v)}.} {\it Representation of powers:}
\begin{equation}\label{ber-1}
x^n=\frac{1}{n+1}\sum_{k=0}^{n}\binom{n+1}{k}B_k(x).
\end{equation}
\begin{proof}
From (\ref{inverse-formula}) and the fact $\displaystyle\E(\xi^{n-k})=\frac{1}{n-k+1}$ we have
\[x^n=\sum_{k=0}^{n}\binom{n}{k}\frac{1}{n-k+1}B_k(x),\]
and this implies (\ref{ber-1}).\\
\end{proof}
Next we give a new proof of the moment representation for Bernoulli polynomials first presented by Sun \citep{Sun} (see also Srivastava and Vignat \cite{S-V}).\\

\noindent
{\it Property {\rm(vi)}.} {\it Moment representation:}
\begin{equation}\label{ber-moment-representation}
B_n(x)=\E(x-\frac{1}{2}+i\zeta)^n,
\end{equation}
{\it where $\zeta$ has the logistic distribution with the density 
\[f^{(\zeta)}(x)=\frac{\pi}{2}{\rm sech}^2(\pi x), \quad x\in \R.\]}
\begin{proof}
 We make a symmetric random variable from the uniform random variable $\xi$ by putting $\eta:=\xi-1/2$. Then $\eta$ has the uniform distribution on $[-1/2, 1/2]$. We have
\[\E(e^{u\eta})=e^{-u/2}\E(e^{u\xi})=\frac{e^{u/2}-e^{-u/2}}{u},\] 
and, hence,
\[Q_{n}^{(\eta)}(x)=B_n(x+1/2).\]
Consider the cosine Fourier transform
\[\E(\cos(u\zeta))=\frac{1}{\E(e^{u\eta})}=\frac{u}{e^{u/2}-e^{-u/2}}=\frac{u/2}{\sinh(u/2)},\]
The inverse of this transform is given by (see \cite[(2)p 30]{erdelyi54})
\[f^{(\zeta)}(x)=\frac{\pi}{2}\frac{1}{\cosh^2(\pi x)}=\frac{\pi}{2}{\rm sech}^2(\pi x).\]
Therefore, we have the moment representation
\[Q_{n}^{(\eta)}(x)=\E(x+i\zeta)^n,\]
and, hence,
\[B_n(x)=Q_{n}^{(\eta)}(x-\frac{1}{2})=\E(x-\frac{1}{2}+i\zeta)^n.\]
\end{proof}

The following result follows from the formula of the Appell polynomials associated to the sum of two independent variables.\\

\noindent
{\it Property {\rm(vii)}.} {\it For $n=0,1,2,\dots$}
\begin{equation}\label{prop-6}
B_n(x)=2^{n-1}\Big(B_n(\frac{x}{2})+B_n(\frac{x+1}{2})\Big).
\end{equation}
\begin{proof}
Let $\eta\sim U(0,1)$ and $\xi\sim Ber(1/2)$, i.e., $\Pro(\xi=0)=\Pro(\xi=1)=1/2$. Assume that $\eta$ and $\xi$ are independent. Then (see \citep[Proposition 1]{ta})
\begin{equation*}
Q_{n}^{(\eta+\xi)}(x)=2^n B_n(x/2).
\end{equation*}
From which and (\ref{indepen-represen}) we obtain
\[Q_{n}^{(\eta)}(x)=\E(Q_{n}^{(\eta+\xi)}(x+\xi))=2^{n-1}\Big(B_n(\frac{x}{2})+B_n(\frac{x+1}{2})\Big).\]
But $B_n(x)\equiv Q_{n}^{(\eta)}(x)$. So the proof is complete.
\end{proof}
\noindent
{\it Property {\rm (viii)}.} {\it For $n=0,1,2\dots$}
\begin{equation}\label{value-one-two}
B_n(1/2)=-(1-2^{1-n})B_n(0)
\end{equation}

\begin{proof}
Formula (\ref{value-one-two}) is equivalent with
\[B_n(0)=2^{n-1}\Big(B_n(1/2)+B_n(0)\Big),\]
which is obtained directly by putting $x=0$ in (\ref{prop-6}).
\end{proof}
\noindent
We next give a simple proof for the sign of the Bernoulli numbers (see \cite{carlitz-scoville})\\

\noindent
{\it Property {\rm (ix)}.} {\it The sign of the Bernoulli numbers:}
\begin{equation}\label{sign-b-number}
(-1)^{n-1}B_{2n}(0)>0, \quad\text{for all}\quad n\geq 1.
\end{equation}
\begin{proof}
From the moment representation (\ref{ber-moment-representation}) we have
\[B_{2n}(1/2)=(-1)^n\E(\zeta^{2n}).\]
Combining this fact with (\ref{value-one-two}) yields
\[(-1)^{n-1}B_{2n}(0)=\frac{\E(\zeta^{2n})}{(1-2^{1-2n})}>0,\]
where $\zeta$ is as in Property (vi), from which readily follows (\ref{sign-b-number}).
\end{proof}

\subsection{Euler polynomials and Bernoulli distribution}
Let $\xi\sim Ber(1/2)$, i.e.,
\begin{equation}\label{ber-dist}
 \Pro(\xi=0)=\Pro(\xi=1)=\frac{1}{2}.
\end{equation} 
Then
\[\E(e^{u\xi})=\frac{e^u+1}{2},\]
and, hence, 
\begin{equation}\label{eur-def}
\frac{e^{ux}}{\E(e^{u\xi})}=\frac{2e^{ux}}{e^u+1}=\sum_{n=0}^{\infty}\frac{u^n}{n!}E_n(x),
\end{equation}
where $E_n(x)$ are the so called the Euler polynomials (see \citep[p 804]{AS}). Consequently, the Appell polynomials associated with $\xi$ coincide with the Euler polynomials and we have from (\ref{series-exp})
\[E_n (x)=\sum_{m=0}^{n}\binom{n}{m}\tilde{E}_{n-m}x^m,\]
where the numbers $\tilde{E}_{k}=E_{k}(0), k=0,1\dots n $  can be generated from the recurrence equation (\ref{app-def2-3}) which - since $\E(\xi^k)=1/2$ - takes the form
\begin{equation}\label{eu-n-zero}
\tilde{E}_n=-\frac{1}{2}\sum_{k=0}^{n-1}\binom{n}{k}\tilde{E}_k.
\end{equation}
Calculating from (\ref{eu-n-zero}) we obtain 
$$\tilde{E}_0=1, \tilde{E}_1=-\frac{1}{2}, \tilde{E}_2=0, \tilde{E}_3=\frac{1}{4}, \tilde{E}_4=0, \tilde{E}_5=-\frac{1}{2},\dots.$$
Notice from (\ref{eur-def}) that
\[\frac{2}{e^u+1}+\frac{u}{2}-1=\sum_{n=2}^{\infty}\frac{u^n}{n!}\tilde{E}_n.\]
Consequently, since the function on the LHS is odd it follows that  $\tilde{E}_{2k}=0$ for $k\geq1$. 
 
Similarly as for the Bernoulli polynomials, we also discuss the following properties (see also \citep[p 804]{AS}) of the Euler polynomials via the probabilistic approach:\\

\noindent
{\it Property} (i). {\it Difference property:}
\begin{equation}\label{diff-E}
E_n(x+1)+E_n(x)=2x^n,
\end{equation} 
{\it which yields}
\begin{equation}\label{eu-sum-power}
\sum_{k=0}^{m}(-1)^k k^n=\frac{(-1)^m E_{n}(m+1)+E_n(0)}{2}.
\end{equation}
\begin{proof}
Using again the mean value property, we get
\[x^n=\E(Q_{n}^{(\xi)}(x+\xi))=\sum_{m=0}^{n}\binom{n}{m}\tilde{E}_{m}\frac{1}{2}\Big[(x+1)^{n-m}+x^{n-m}\Big],\]
which yields (\ref{diff-E}). The proof of (\ref{eu-sum-power}) is obtained from (\ref{diff-E}) by observing
\[\sum_{k=0}^{m}(-1)^k k^n=\sum_{k=0}^{m}\frac{(-1)^kE_n(k)-(-1)^{k+1}E_{n}(k+1)}{2}.\]
\end{proof}

\noindent
{\it Property} (ii). {\it Symmetry:}
\begin{equation*}
E_n(1-x)=(-1)^{n}E_n(x).
\end{equation*}

\noindent
{\it Property} (iii). {\it Representation of powers:}
\begin{equation*}
x^n=E_n(x)+\frac{1}{2}\sum_{k=0}^{n-1}\binom{n}{k}E_k(x).
\end{equation*}

\noindent
{\it Property} (iv). {\it Euler numbers:}
$$\hat{E}_n:=2^nE_n(\frac{1}{2}),\quad n=0,1,2,\dots.$$
{\it the so-called Euler numbers, are integers}.
\begin{proof}
From (iii), putting $x=1/2$ we have the recurrence for the Euler numbers
\[\hat{E}_n=1-\sum_{k=0}^{n-1}\binom{n}{k}2^{n-k-1}\hat{E}_k.\]
Using induction here it easily seen that the Euler numbers are integers.\\
\end{proof}

\noindent
{\it Property} (v). {\it Relationship between the Bernoulli and Euler polynomials:}.
\begin{equation}\label{ber-eu-repre}
E_{n}(x)=\frac{2^{n+1}}{n+1}\Big[B_{n+1}\big(\frac{x+1}{2}\big)-B_{n+1}\big(\frac{x}{2}\big)\Big].
\end{equation}
\begin{proof}
Let $\eta$ and $\xi$ be two independent random variables with $\eta\sim U(0,1)$ and $\xi\sim Ber(1/2)$. Then as in property (vi) for Bernoulli polynomials,
\begin{equation}\label{ber-eur-rela}
Q_{n}^{(\eta+\xi)}(x)=2^nB_n(x/2).
\end{equation}
Using (\ref{ber-eur-rela}) and (\ref{indepen-represen}) we obtain
\[E_n(x)=Q_{n}^{(\xi)}(x)=\int_{0}^{1}Q_{n}^{(\eta+\xi)}(x+y)dy=\int_{0}^{1}2^nB_n\big(\frac{x+y}{2}\big)dy.\]
Setting $z=(x+y)/2$  we now have
\begin{equation}\label{ber-eur-rel-2}
E_n(x)=2^{n+1}\int_{\frac{x}{2}}^{\frac{x+1}{2}}B_n(z)dz=2^{n+1}\Big[\frac{B_{n+1}\big(\frac{x+1}{2}\big)-B_{n+1}\big(\frac{x}{2}\big)}{n+1}\Big].
\end{equation}
Notice that the last equality of (\ref{ber-eur-rel-2}) is obtained from  recursive differential equation (\ref{recursive}). 
\end{proof}

\noindent
{\it Property} (vi). {\it Moment representation:
\[E_n(x)=\E(x+i\zeta)^n,\]
where $\zeta$ has the hyperbolic secant distribution with the density
\[f^{(\zeta)}(y)={\rm sech}(\pi y).\]
In particular, the Euler numbers satisfy
\[\hat{E}_{2n}=(-1)^n 2^{2n}\E(\zeta^{2n})\quad \text{and}\quad \hat{E}_{2n+1}=0, \quad n\geq 1,\]
from which it follows the sign property of the Euler numbers (see also \cite{carlitz-scoville})
 $$(-1)^n\hat{E}_{2n}> 0.$$
}
\begin{proof}
Let $\theta:=\xi-1/2$. Then $\theta$ is symmetric and we get
\[\E(e^{u\theta})=e^{-u/2}\E(e^{u\xi})=\frac{e^{u/2}+e^{-u/2}}{2}.\]
Hence, 
\[Q_{n}^{(\theta)}(x)=E_n(x+\frac{1}{2}).\]
We have the cosine transform \[\E(\cos(u\zeta))=\frac{1}{\E(e^{u\eta})}=\frac{2}{e^{u/2}+e^{-u/2}}=\frac{1}{\cosh(u/2)},\]
and, hence, inverting this transform gives (see \cite[(1)p 30]{erdelyi54})
 \[f^{(\zeta)}(y)={\rm sech}(\pi y),\]
which results to the moment representation for $Q_{n}^{(\theta)}$
\[Q_{n}^{(\theta)}(x)=\E(x+i\zeta)^n.\]
 Consequently (see also \cite{Sun})
 \[E_n(x)=\E(x-\frac{1}{2}+i\zeta)^n.\]
 \end{proof}
 \begin{remark}\label{talacko}
Talacko \citep{Talacko} considers a class of symmetric distributions which is called the family of Perks' distributions with the density function
\[f(x)=\frac{c}{e^x+k+e^{-x}},\]
 where $c$ is the normalizing constant and $k>-2$, $k\in \R$ . For $k=0$ we have the so-called hyperbolic secant distribution and for $k=2$ the logistic distribution. In \citep{Talacko} the following characterizations are also proved:\\
 \noindent
(i) Let $\xi_2$ be a random variable having the logistic distribution, then
 $$\xi_2\stackrel{(d)}=\sum_{j=1}^{\infty}\frac{L_j}{2j\pi},$$
 where $L_i$ are i.i.d of sequence of Laplace distributed random variables with the density $\displaystyle g(x)=\frac{1}{2}e^{-|x|}$.\\
\noindent 
 (ii) Let $\xi_0$ have the hyperbolic secant distribution, then  
   $$\xi_0\stackrel{(d)}=\sum_{j=1}^{\infty}\frac{L_j}{(2j-1)\pi}.$$
Sun \citep{Sun} uses these characterizations to give the moment representations for Bernoulli polynomials and Euler polynomials. Such representations are also used in Srivastava and Vignat  \citep{S-V}.
 \end{remark}
 
\subsection{Hermite polynomials and normal distribution}
Let $\xi\sim N(0,1)$. Then
\[\E(e^{u\xi})=e^{u^2/2},\]
and, hence,
\[\frac{e^{ux}}{\E(e^{u\xi})}=e^{ux-u^2/2}.\]
So the Appell polynomials $Q_{n}^{(\xi)}$ associated with $\xi$ are the Hermite polynomials $He_n$ (see also \citep{Sa}): 
\[Q_{n}^{(\xi)}(x)=He_n(x)= n!\sum_{k=0}^{[n/2]}\frac{(-1)^k x^{n-2k}}{(n-2k)! k! 2^k}.\]
\begin{remark}
Let $\hat{\xi}\sim N(\mu,\sigma^2)$ then $\hat{\xi}\stackrel{(d)}=\mu+\sigma\xi$. Denote by $H_n$  the Appell polynomials associated with $\hat{\xi}$, from (\ref{appell}) we have 
\[H_n(x)=\sigma^n He_n\Big(\frac{x-\mu}{\sigma}\Big).\]
\end{remark}
\noindent
We discuss now some basis properties of $He_n$.\\

\noindent
{\it Property} (i). {\it Symmetry:}
\[He_n(x)=(-1)^n He_{n}(-x).\]
\begin{proof}
Since the random variable $\eta:=-\xi$ has also the standard normal distribution, the property follows from (\ref{relation}).
\end{proof}

\noindent
{\it Property} (ii). {\it Moment representation:}
\[He_{n}(x)=\E(x+i\xi)^n,\]
{\it In particular,}
\[He_{2n}(0)=(-1)^n\E(\xi^{2n})=(-1)^n\frac{(2n)!}{2^n n!}.\]
\begin{proof}
It is seen that $\xi$ is symmetric and the function $$\psi(u)=(\E(e^{u\xi}))^{-1}=e^{u^2/2}$$ is well defined for all $u$. 
Consider the cosine transform
\[\E(\cos(u\zeta))=2\int_{0}^{\infty}f^{(\zeta)}(y)\cos(uy)dy=e^{-u^2/2}.\]
From \cite[(11) p 15]{erdelyi54}
\[\int_{0}^{\infty}\frac{e^{-y^2/2}}{\sqrt{2\pi}}\cos(uy)dy=\frac{1}{2}e^{u^2/2},\]
and, hence, $\zeta\stackrel{(d)}=\xi \sim N(0,1)$  and $\displaystyle\E(\zeta^{2n})=\frac{(2n)!}{2^n n!}$.
\end{proof}
\begin{remark}
Withers \cite{withers} derives the moment representation for multiple Hermite polynomials by using Taylor expansion of $\E(e^{u(x+iZ)})$, where $Z$ is a multidimensional normally distributed random variable. 
\end{remark}

\noindent
{\it Property} (iii). {\it Representation of powers:}
\[x^{2n}=\sum_{i=1}^{n}\binom{2n}{2k}\frac{(2(n-k))!}{2^{n-k}(n-k)!}He_{2k}(x).\]

Next we consider the orthogonality of Hermite polynomials. As proved in \cite{Car2} (see also \citep{So}) the Hermite polynomials are the only Appell polynomials which are orthogonal.  Here we give a new proof of this fact via the moment generating function. Recall that by Favard's theorem (see \cite[p 21]{Chihara}, \cite{Sch}) the family of polynomials $\{P_n, P_0\equiv 1, n=1,2,\dots\}$ is orthogonal if and only if there exist sequences $\{a_n\}_{n=1}^{\infty}$ and  $\{b_n\}_{n=1}^{\infty}$ such that 
\[P_{n+1}(x)=(x-a_n)P_{n}(x)-b_n P_{n-1}(x), \quad n=1,2,\dots.\]

\noindent
{\it Property} (iv). {\it Let $\xi$ be a random variable having some exponential moments. The Appell polynomials $\{Q_{n}^{(\xi)}\}_{n\geq 0}$ associated $\xi$  are orthogonal if and only if $\xi$ is normally distributed, i.e., $Q_{n}^{(\xi)}$ are Hermite polynomials $He_n$, $n=0,1\dots$.}

\begin{proof}
Since it is assumed that $\xi$ has some exponential moments, the cumulants $\kappa_n, n=1,2,\dots$ are obtained via formula
\[\E(e^{u\xi})={\rm exp}\big\{\sum_{n=1}^{\infty}\kappa_n\frac{u^n}{k!}\big\}.\] 
From (\ref{appell}) we have
\begin{equation*}{\label{ortho}}
{\rm exp}\big\{ux-\sum_{n=1}^{\infty}\kappa_n\frac{u^n}{n!}\big\}=\sum_{n=0}^{\infty}\frac{u^n}{n!}Q_{n}^{(\xi)}(x).
\end{equation*}
Taking derivative in $u$ yields
\[\Big(x-\sum_{n=1}^{\infty}\frac{u^{n-1}}{n!}n\kappa_n\Big)\exp\Big\{ux-\sum_{n=1}^{\infty}\frac{u^n}{n!}\kappa_n\Big\}=\sum_{n=1}^{\infty}\frac{u^{n-1}}{n!}nQ_{n}^{(\xi)}(x).\]
It follows that
\[\Big(x-\sum_{n=0}^{\infty}\frac{u^{n}}{n!}\kappa_{n+1}\Big)\sum_{n=0}^{\infty}\frac{u^n}{n!}Q_{n}^{(\xi)}(x)=\sum_{n=1}^{\infty}\frac{u^{n-1}}{n!}nQ_{n}^{(\xi)}(x),\]
which is equivalent with
\[x\sum_{n=0}^{\infty}\frac{u^n}{n!}Q_{n}^{(\xi)}(x)-\sum_{i=0}^{\infty}\sum_{j=0}^{\infty}\kappa_{i+1}Q_{j}^{(\xi)}(x)\frac{u^{i+j}}{i!j!}=\sum_{n=0}^{\infty}\frac{u^n}{n!}Q_{n+1}^{(\xi)}(x).\]
Substituting  $n=i+j$ gives
\[x\sum_{n=0}^{\infty}\frac{u^n}{n!}Q_{n}^{(\xi)}(x)-\sum_{n=0}^{\infty}\frac{u^{n}}{n!}\sum_{j=0}^{n}\binom{n}{j}\kappa_{n+1-j}Q_{j}^{(\xi)}(x)=\sum_{n=0}^{\infty}\frac{u^n}{n!}Q_{n+1}^{(\xi)}(x).\]
Consequently,
\begin{equation*}\label{represent-app-0}
xQ_{n}^{(\xi)}(x)-\sum_{j=0}^{n}\binom{n}{j}\kappa_{n+1-j}Q_{j}^{(\xi)}(x)=Q_{n+1}^{(\xi)}(x),
\end{equation*}
and, hence, we obtain the representation 
\begin{equation}{\label{represent-app}}
Q_{n+1}^{(\xi)}(x)=(x-\kappa_1)Q_{n}^{(\xi)}(x)-n\kappa_2 Q_{n-1}^{(\xi)}(x)-\sum_{j=0}^{n-2}\binom{n}{j}\kappa_{n+1-j}Q_{j}^{(\xi)}(x).
\end{equation}
It follows from Favard's theorem that $Q_{n}^{(\xi)}$ is orthogonal if and only if $\kappa_k=0$ for all $k\geq 2,$  in other words, if and only if $\xi$ is normally distributed. So we have that $Q_{n}^{(\xi)}$, $n=0,1,2\dots$ are the Hermite polynomial. Furthermore, in case of standard normal, i.e., the cumulants $\kappa_1=0, \kappa_2=1$, we have the recurrence equation 
\[He_{n+1}(x)=xHe_n(x)-nHe_{n-1}(x).\]
\end{proof} 
\begin{remark}
The polynomials $Q_{n}^{(\xi)}$ can be represented in terms of the Bell polynomials $Be_n$ as follows (see Madan and Yor \cite{madanyor})
\[Q_{n}^{(\xi)}(x)=Be_n(x-\kappa_1, -\kappa_2,\dots, \kappa_n),\]
and then by using formula (8) in \citep{SU} we also obtain formula \eqref{represent-app}.
\end{remark}

\subsection{Laguerre polynomials and gamma distribution}

Let $\xi\sim\Gamma(\beta, \alpha), \alpha, \beta>0$, i.e., $\xi$ is a gamma distributed random variable with parameters $\alpha$ and $\beta$. Then 
\[\E(e^{u\xi})=\Big(\frac{\alpha}{\alpha-u}\Big)^{\beta},\quad |u|<\alpha.\]
Take now $\alpha=1$. We have
$$\frac{e^{u x}}{\E(e^{u\xi})}=(1-u)^{\beta}e^{ux}=\sum_{n\geq 0}\frac{u^n}{n!}\sum_{m=0}^{n}\frac{n!}{m!}\binom{\beta}{n-m}(-1)^{n-m}x^m,$$
where 
\[\binom{\beta}{n-m}=\frac{\beta(\beta-1)\dots(\beta-(n-m)+1)}{(n-m)!}.\]
Consequently, the Appell polynomials $Q_{n}^{(\xi)}$ are:
\begin{equation}\label{appell-gamma}
Q_{n}^{(\xi)}(x)=\sum_{m=0}^{n}(-1)^{n-m}\frac{n!}{m!}\binom{\beta}{n-m}x^m.
\end{equation}
In case  $\beta$ is an integer we obtain
\[Q_{n}^{(\xi)}(x)=\sum_{m=\max(0, n-\beta)}^{n}(-1)^{n-m}\frac{n!}{m!}\binom{\beta}{n-m}x^m.\]
Recall that the Laguerre polynomials $L_{n}^{(\gamma-n)}, \gamma>-1$ are defined by (see \citep[p 189]{erdlyi53})
 \begin{equation}\label{laguerre}
 (1+u)^{\gamma}e^{-ux}=\sum_{n=0}^{\infty}L_{n}^{(\gamma-n)}(x)u^n.
 \end{equation}
Hence,    $Q_{n}^{(\xi)}\equiv(-1)^n n!L_{n}^{(\beta-n)}$, in other words, the Laguerre polynomials $L_{n}^{(\beta-n)}$ are explicitly given by 
\begin{equation*}
L_{n}^{(\beta-n)}(x)=\sum_{m=0}^{n}(-1)^{2n-m}\binom{\beta}{n-m}\frac{x^m}{m!}=\sum_{m=0}^{n}(-1)^{m}\binom{\beta}{n-m}\frac{x^m}{m!}.
\end{equation*}
 As explained in subsection \ref{root} the property that an Appell polynomial has a unique positive root is crucial in the theory of optimal stopping of L\'evy processes. In the next proposition it is seen that if the parameter $\beta\leq 1$ then the gamma distribution has this property.
\begin{proposition}\label{app-gamma}
If $0<\beta\leq 1$ then  $Q_{n}^{(\xi)}(x)$ has a unique positive root $x^*$, $Q_{n}^{(\xi)}(x)$  is negative on $(0, x^*)$ and positive and increasing on $(x^*, \infty)$.
\end{proposition}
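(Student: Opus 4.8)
The plan is to run an induction on $n\ge 1$, using the recursion (\ref{recursive}), $\frac{d}{dx}Q_n^{(\xi)}(x)=nQ_{n-1}^{(\xi)}(x)$, to convert the sign behaviour of $Q_{n-1}^{(\xi)}$ into monotonicity of $Q_n^{(\xi)}$, together with a direct evaluation of the constant term $Q_n^{(\xi)}(0)$. First I would record two facts read off from the explicit formula (\ref{appell-gamma}). The term $m=n$ shows $Q_n^{(\xi)}$ is monic of degree $n$, so $Q_n^{(\xi)}(x)\to+\infty$ as $x\to+\infty$. The term $m=0$ gives $Q_n^{(\xi)}(0)=(-1)^n n!\binom{\beta}{n}$; writing $\binom{\beta}{n}=\frac{\beta(\beta-1)\cdots(\beta-n+1)}{n!}$ and noting that for $0<\beta\le 1$ the factor $\beta$ is positive while each of $\beta-1,\dots,\beta-(n-1)$ is $\le 0$, one gets $(-1)^{n-1}\binom{\beta}{n}=\frac{\beta\prod_{j=1}^{n-1}(j-\beta)}{n!}\ge 0$, hence $Q_n^{(\xi)}(0)=-n!\,(-1)^{n-1}\binom{\beta}{n}\le 0$ for all $n\ge 1$.

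The base case $n=1$ is immediate: $Q_1^{(\xi)}(x)=x-\beta$ has unique positive root $x^*=\beta$, is negative on $(0,\beta)$ and positive and increasing on $(\beta,\infty)$. For the inductive step assume the assertion for $n-1$, with unique positive root $x_{n-1}^*$, so $Q_{n-1}^{(\xi)}<0$ on $(0,x_{n-1}^*)$ and $Q_{n-1}^{(\xi)}>0$ on $(x_{n-1}^*,\infty)$. By (\ref{recursive}), $Q_n^{(\xi)}$ is strictly decreasing on $(0,x_{n-1}^*)$ and strictly increasing on $(x_{n-1}^*,\infty)$. Since $Q_n^{(\xi)}(0)\le 0$, the strict decrease forces $Q_n^{(\xi)}<0$ on $(0,x_{n-1}^*]$, in particular $Q_n^{(\xi)}(x_{n-1}^*)<0$. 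On $(x_{n-1}^*,\infty)$ the polynomial increases strictly from this negative value to $+\infty$, so it vanishes at exactly one point $x_n^*>x_{n-1}^*$, is negative on $(x_{n-1}^*,x_n^*)$ and positive on $(x_n^*,\infty)$. Combining the two intervals: $x_n^*$ is the unique positive root of $Q_n^{(\xi)}$, the polynomial is negative on $(0,x_n^*)$, and it is positive and increasing on $(x_n^*,\infty)\subset(x_{n-1}^*,\infty)$. This closes the induction.

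The argument is essentially mechanical; the only place needing a little care is the sign of $Q_n^{(\xi)}(0)$ and the boundary value $\beta=1$, where $\binom{\beta}{n}=0$ for $n\ge 2$ and hence $Q_n^{(\xi)}(0)=0$ (there in fact $Q_n^{(\xi)}(x)=x^{n-1}(x-n)$). The inequality $Q_n^{(\xi)}(0)\le 0$ together with the strict monotonicity supplied by the inductive hypothesis still delivers the conclusion without change, so no separate treatment of $\beta=1$ is needed.
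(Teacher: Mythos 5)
Your proof is correct, but it takes a genuinely different route from the paper's. The paper argues directly from the explicit expansion \eqref{appell-gamma}: writing $Q_{n}^{(\xi)}(x)=\sum_{m=0}^{n}q_m x^m$, it checks that $0<\beta\leq 1$ forces $q_m\leq 0$ for all $0\leq m\leq n-1$ while $q_n=1$, so the coefficient sequence has exactly one sign change and Descartes' rule of signs (recalled in subsection \ref{root}) yields the unique positive zero, with the sign and monotonicity behaviour on $(0,x^*)$ and $(x^*,\infty)$ read off from that coefficient structure. You instead extract only two facts from \eqref{appell-gamma} --- that $Q_{n}^{(\xi)}$ is monic and that $Q_{n}^{(\xi)}(0)=(-1)^n n!\binom{\beta}{n}\leq 0$ for $0<\beta\leq 1$ --- and then induct through the Appell relation \eqref{recursive}, turning the sign pattern of $Q_{n-1}^{(\xi)}$ into strict monotonicity of $Q_{n}^{(\xi)}$ on the two ranges and locating the single zero by the intermediate value theorem; your treatment of the boundary case $\beta=1$, where $Q_{n}^{(\xi)}(0)=0$, is also sound, since strict decrease on $(0,x_{n-1}^*]$ still gives $Q_{n}^{(\xi)}(x_{n-1}^*)<0$. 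The trade-off: the paper's argument is shorter once all coefficients are computed, whereas yours makes the claims ``negative on $(0,x^*)$'' and ``positive and increasing on $(x^*,\infty)$'' fully explicit (the paper leaves them implicit after invoking Descartes) and is more robust in principle, since it only needs nonpositivity of the constant terms $Q_{n}^{(\xi)}(0)$ rather than of every lower-order coefficient, so the same induction would transfer to other Appell families satisfying that weaker hypothesis, in the spirit of the general root proposition of subsection \ref{root} but without any appeal to Descartes' rule.
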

\begin{proof}
We have
\[Q_{n}^{(\xi)}(x)=\sum_{m=0}^{n}(-1)^{n-m}\frac{n!}{m!}\binom{\beta}{n-m}x^m=\sum_{m=0}^{n}q_m x^m,\]
where 
\[q_m:=(-1)^{n-m}\frac{n!}{m!}\binom{\beta}{n-m}=(-1)^{n-m}\frac{n!\beta(\beta-1)\dots(\beta-(n-m)+1)}{m!(n-m)!}\]
for all $0\leq m\leq n-1$ and $q_n=1.$  Since $0<\beta\leq 1$ then
\[q_m=(-1)^{2(n-m)-1}\frac{\beta(1-\beta)\dots((n-m)+1-\beta)n!}{(n-m)!m!}.\]
Obviously,
\[q_m\leq 0\quad\text{for all}\quad 0\leq m\leq n-1.\]
By the Descartes' rule of signs the proof is complete.
\end{proof}
\begin{remark}
For arbitrary $\alpha >0$ it holds
\[Q_{n}^{(\xi)}(x)\equiv\frac{(-1)^n n!}{\alpha^n}L_{n}^{(\beta-n)}(x).\]
In particular, for $\beta=1$, i.e., $\xi\sim Exp(\alpha)$ we have (see also \cite{Sa})
 \[Q_{n}^{(\xi)}(x)=(x-\frac{n}{\alpha})x^{n-1}, \quad n=1,2,\dots.\]
\end{remark}
We conclude by presenting the following representation of powers via Laguerre polynomials 
 \[x^n=\sum_{m=0}^{n}(-1)^m\frac{\Gamma(\beta+n-m)}{\Gamma(\beta)}m!L_{k}^{(\beta-n)}(x).\]
To prove this, use (\ref{inverse-formula}) and the fact that 
$$\E(\xi^k)=\frac{\Gamma(\beta+k)}{\Gamma(\beta)}.$$
\noindent
{\bf Acknowledgements.} The author would like to thank Professor Paavo Salminen for helpful discussions and valuable comments which improved this paper. The financial support from the Finnish Doctoral Programme in Stochastics and Statistics is gratefully acknowledged.

\end{document}